\providecommand{\norm}[1]{\left\Vert#1\right\Vert}
\def\XXint#1#2#3{{\setbox0=\hbox{$#1{#2#3}{\int}$ }
\vcenter{\hbox{$#2#3$ }}\kern-.6\wd0}}
\newtheorem{Theorem}{Theorem}[section]
\newtheorem{Lemma}[Theorem]{Lemma}
\newtheorem{Proposition}{Proposition}[section]
\theoremstyle{definition}
\newtheorem{Definition}{Definition}[section]
\newtheorem{Corollary}{Corollary}[section]
\theoremstyle{remark}
\newtheorem{Remark}{Remark}[section]
\numberwithin{equation}{section} \allowdisplaybreaks
\author{Zhong Tan}
\address{ School of Mathematical Sciences and Fujian Provincial Key Laboratory on Mathematical Modeling and Scientific Computing, Xiamen University \\ Xiamen,
Fujian, 361005, P. R. China.} \email[Z. Tan]{ztan85@163.com}
\author{Huaqiao Wang}
\address{Institute of Applied Physics and Computational Mathematics,
P.O.Box 8009, Beijing 100088, China.} \email[H.Q.
Wang]{hqwang111@163.com}
\author{Yucong Wang}
\address{School of Mathematical Sciences, Xiamen University, Xiamen, 361005, China.}
\email[Y.C. Wang]{yucongwang666@163.com}
\title[Existence of martingale solutions to the stochastic Non-Newtonian Fluids]
{Existence of a martingale weak solution to the Equations of
Non-Stationary Motion of Non-Newtonian Fluids with a stochastic
perturbation} \keywords{$L^{\infty}$-truncation, martingale weak
solution, Non-Newtonian fluids, pressure
decompsition.}\subjclass[2010]{35R60, 35D30, 60H15, 35K55, 76D03.}
\date{\today}
\begin{document}
\begin{abstract}
In this paper, we consider the stochastic 
incompressible non-Newtonian fluids driven by a cylindrical Wiener
process $W$ with shear rate dependent on viscosity in a bounded
Lipschitz domain $D\in \mathbb{R}^n$ during the time interval
$(0,T)$. 
For 
$q>\frac{2n+2}{n+2}$ in the growth conditions
\eqref{1.2}, we prove the existence of a martingale weak solution
with $\nabla\cdot u=0$ by using a pressure decomposition which is
adapted to the stochastic setting, the stochastic compactness method
and the $L^\infty$-truncation.
\end{abstract}

\maketitle
\section{Introduction}
\noindent Let $D\in \mathbb{R}^n$ ($n\geq 2$) be a bounded Lipschitz
domain. For the time interval $(0,T)$, we set $Q:=(0,T)\times D$. In
this paper, we consider the following equations:
\begin{equation}\label{1.1}
\begin{cases}
du+\nabla\cdot(u\otimes u-S+p\text{I})dt=fdt+\Phi(u)dW,\\
\nabla\cdot u=0,\\
u|_{\partial D}=0,\\
u|_{t=0}=u_0,
\end{cases}
\end{equation}
where $S=\{S_{ij}\}$ is the deviatoric stress tensor, $p$ the
pressure, $u$ the velocity, $f$ the external force and $W$ a
cylindrical Wiener process with values in a Hilbert space. $\Phi$
satisfy the linear growth assumption (see Sect.2 for details).

The stress $S$ may depend on both $(x,t)$ and the ``rate of strain
tensor" $\mathbb{D}=\{\mathbb{D}_{ij}\}$, which is defined by
$\mathbb{D}_{ij}=\mathbb{D}_{ij}(u):=\frac{1}{2}(\partial_{x_j}u^i+\partial_{x_i}u^j)$,
$i,j=1,\cdots ,n$. We refer to \cite{GAGM}, \cite{GKB} and \cite{HL}
about the continuum mechanical background. As far as we know, the
fluids with shear dependent viscosity are often used in engineering
practice.  So it's meaningful to study this kind of fluid. In this
paper, $S$ is assumed to be a function of the shear rate and the
constitutive relations reads as
$$S=\nu(\mathbb{D}_{\text{II}})\mathbb{D},$$ where
$\mathbb{D}_{\text{II}}=\frac{1}{2}\mathbb{D}:\mathbb{D}$ is the
second invariant of $\mathbb{D}$. Here are some examples of precise
constructions of $S$: for $q\in (1,+\infty)$, constant $\nu_0$,
\begin{align*}
S&=\nu_0(\mathbb{D}_{\text{II}})^{\frac{q-2}{2}}\mathbb{D},\\
S&=\nu_0(1+\mathbb{D}_{\text{II}})^{\frac{q-2}{2}}\mathbb{D}.
\end{align*}
For detials, see \cite{GAGM, RBBRCAOH, WLW}. If $q\in(1,2)$, we say
the non-Newtonian fluids is pseudoplastic or shear thinning (for
example, ketchup); if $q=2$, it's Newtonian fluids; if
$q\in(2,+\infty)$, we say the non-Newtonian fluids is dilatant or
shear thickening (for example, batter). The following two
constitutive laws which are also of interest in engineering practice
are given by
\begin{align*}S=\nu_0(\mathbb{D}_{\text{II}})^{\frac{q-2}{2}}\mathbb{D}+\nu_{\infty}\mathbb{D},\quad\\
S=\nu_0(1+\mathbb{D}_{\text{II}})^{\frac{q-2}{2}}\mathbb{D}+\nu_{\infty}\mathbb{D},
\end{align*}
where $\nu_0$ and $\nu_{\infty}$ are positive constants and $q\in
[1,+\infty)$. An extensive list for specific $q$-values for
different fluids can be found in \cite{RBBRCAOH}.

 For $q\in [1,+\infty)$, we assume the
deviatoric stress tensor $S$ satisfy the following conditions in
this paper: $S:Q\times
\mathbb{M}_{\text{sym}}^n\rightarrow\mathbb{M}_{\text{sym}}^n$ is a
Carath\'{e}odory function. $\forall \xi\in
\mathbb{M}_{\text{sym}}^n$ (vector space of all symmetric $n\times
n$ matrices $\xi=\{\xi_{ij}\}$. We equip $\mathbb{M}_{\text{sym}}^n$
with scalar product $\xi:\eta$ and norm
$\norm{\xi}:=(\xi:\xi)^\frac{1}{2}$.), for almost
all $(x,t)\in Q$,
\begin{equation}\label{1.2}
|S(x,t,\xi)|\leq C_0\norm{\xi}^{q-1}+\eta_1,
\end{equation}
where $C_0>0$, $\eta_1\geq0$, $\eta_1\in L^{q'}(Q)$,
$1/q+1/{q^\prime}=1$; $\forall \xi\in \mathbb{M}_{\text{sym}}^n$,
for almost all $(x,t)\in Q$,
\begin{equation}\label{1.3}
\begin{split}
S(x,t,\xi):\xi\geq C_0\norm{\xi}^{q}-\eta_2,
\end{split}
\end{equation}
where $C_0>0$, $\eta_2\geq0$, $\eta_2\in L^{1}(Q)$; $\forall
\xi,\eta\in \mathbb{M}_{\text{sym}}^n(\xi\neq\eta)$, for almost all
$(x,t)\in Q$,
\begin{equation}\label{1.4}
(S(x,t,\xi)-S(x,t,\eta): (\xi-\eta)>0.
\end{equation}

The flow of a homogenous incompressible fluid without stochastic
part is described by the following equations:
\begin{equation}\label{1.5}
\begin{cases}
\partial_t u+\nabla\cdot(u\otimes u-S+p\text{I})=-\nabla\cdot f,\\
\nabla\cdot u=0,\\
u|_{\partial D}=0,\\
u|_{t=0}=u_0.
\end{cases}
\end{equation}

In the late sixties, Lions and Ladyshenskaya in
\cite{LOA1,LOA2,LOA3,LJL} started the mathematical discussion of
power-law model. In \cite{LOA1}, Ladyzhenskaya achieved the
existence and uniqueness of weak solutions and in \cite{LJL} Lions
achieved these results for $q\geq\frac{3n+2}{n+2}$. They showed the
existence of a weak solution in the space
$L^q(0,T;W^{1,q}_{0,\text{div}}(D))\cap L^{\infty}(0,T;L^2(D))$. In
this particular case that $u\otimes u:\mathbb{D}(u)\in L^1(Q) $
follows from parabolic interpolation, the proof of existence is
based on monotone operators and compactness arguments. In
\cite{JMJNMR}, M\'{a}lek, Ne\v{c}as and Ru\v{z}i\v{c}ka proved the
existence for $q\in [2,\frac{11}{5})$ under the assumption that
$D\in \mathbb{R}^3$ is a bounded domain with $C^3$-boundary and that
$S(\mathbb{D})$ has the form
$S(\mathbb{D})=\partial_\mathbb{D}\Phi(\mathbb{D}_{\textrm{II}}).$
In \cite{JW}, Wolf improved this result to the case
$p>\frac{2n+2}{n+2}$ by using $L^{\infty}$-truncation.

In the fluid motion, apart from the force $f$, there might be
further quantities with a influence on the motion. This influence
usually is small and can be shown by adding a stochastic part to the
equation. The stochastic part to the equation can be understood as a
turbulence. This type of equation is often used in fluid mechanics
since they model the phenomenon of perturbation.  So it's very
interesting to study the stochastic fluids. In SPDES, we consider
two concepts: strong (pathwise) solutions and
 weak (martingale) solutions. Strong solutions means that the
underlying probability space and the Wiener process are given in
advance. While martingale solutions means that the combination of
these stochastic elements and the fluid variables is the solution of
the problem and the original equations are satisfied in the sense of
distributions. Clearly, the existence of strong solutions implies
the existence of martingale solutions.  There are many research
results on the stochastic Newton flow dating back to the 1970's with
the initial work of Bensoussan and Temam \cite{ABRT}. For example,
 the existence
of strong solutions and martingale solutions to the stochastic
incompressible Navier-Stokes equations is established by Da
Prato-Zabczyk \cite{GDJZ}, Breckner \cite{HB}, Menaldi-Sritharan
\cite{JLMSSS}, Glatt-Holtz-Ziane \cite{NGMZ}, Taniguchi \cite{TT},
Cap\'{i}nski-Peszat \cite{MCSP,CP-2001}, Kim \cite{JUK},
Cap\'{i}nski-Gatarek \cite{CG-1994}, Flandoli-Gatarek \cite{FG},
Mikulevicius-Rozovskii \cite{MB-2004, MB-2005}, Brze\'{z}niak-Motyl
\cite{BM} and the references therein; for the stochastic
incompressible MHD equations, the existence of solutions is
considered in \cite{ZTDWHW} and the references therein. For the
stochastic incompressible non Newtonian flow, there are only a few
results. Recently, Breit \cite{BD} proved the existence of a
martingale weak solution of the stochastic Navier-Stokes equations
of the model $S(\mathbb{D}(u))=(1+\mathbb{D}u)^{p-2}\mathbb{D}u$. In
this paper, we will prove the existence of martingale solutions of
the stochastic equations \eqref{1.1} with
$S=\nu(\mathbb{D}_{\text{II}})\mathbb{D}$, which is the general form
of $S(\mathbb{D}(u))=(1+\mathbb{D}u)^{p-2}\mathbb{D}u$.

 Comparing
with the work in \cite{JW}, we face the essential challenge of
establishing sufficient compactness in order to be able to pass to
the limit in the class of solutions. In general it is not possible
to get any compactness in $\omega$ as no topological structure on
the sample space $\Omega$. That is, even if a space $\mathcal{X}$ is
compactly embedded in another space $\mathcal{Y}$, it is not usually
the case that $L^2(\Omega,\mathcal{X})$ is compactly embedded in
$L^2(\Omega,\mathcal{Y})$. As such, Aubin-Lions Lemma or
Arzel\`{a}-Ascoli Theorem, which classically make possible the
passage to the limit in the nonlinear terms, cannot be directly
applied in the stochastic setting. To overcome this difficulty, it
is classical to rather concentrate on compactness of the set of laws
of the approximations (the Prokhorov Theorem, which is used to
obtain compactness in the collection of probability measures
associated to the approximate solutions) and apply the Skorokhod
embedding Theorem, which provides almost sure convergences of a
sequence of random variables that have the same laws as the original
ones, but relative to a new underlying stochastic basis. However,
the Skorokhod embedding Theorem is restricted to metric spaces but
the structure of the stochastic non Newtonian equations naturally
leads to weakly converging sequences. For this, we apply the
Jakubowski-Skorokhod Theorem which is valid on a large class of
topological spaces (including separable Banach spaces with weak
topology).  Compared with the work in \cite{BD}, the biggest
difference is that we use the cut-off function to prove the
approximated equations for
$S=\nu(\mathbb{D}_{\text{II}})\mathbb{D}$, which is the general form
of $S$ in \cite{BD} also hold on the new probability space, rather
than using a general and elementary method that was recently
introduced in \cite{MO}.

The rest of the paper is organized as follows. In Sect 2, we
formulate some stochastic background and give our main Theorem. In
Sect 3, we reconstructed the pressure which disappears in the weak
formulation. In Sect 4, we use Galerkin method adding a large power
of $u$ to study auxiliary problem. In Sect 5, we prove the main
theorem.

\section{Hypotheses and Main Theorem}
Let $(\Omega,\mathscr{F},\mathscr{F}_{t},\mathbb{P})$ be a
stochastic basis, where $\mathscr{F}_t$ is a nondecreasing family of
sub-$\sigma$-fields of $\mathscr{F}$, i.e.,
$\mathscr{F}_s\subset\mathscr{F}_t$ for $0\leq s\leq t\leq T$.
Assume that filtration $\{\mathscr{F}_t,0\leq t\leq T\}$ is
right-continuous and $\mathscr{F}_0$ contains all the
$\mathbb{P}$-negligible events in $\mathscr{F}$.

The process $W$ is a cylindrical Wiener process, i.e.,
$W(t)=\sum_{k\geq1}\beta_k(t)e_k$, with $(\beta_k)_{k\geq1}$ being
mutually independent real-valued standard Wiener processes relative
to $\mathscr{F}_{t}$ and $\{e_k\}_{k\geq1}$ a complete orthonormal
system in a separable Hilbert space ${U}$. Since $W$ don't actually
converge on $U$, we define $U_0\supset U$ by
$U_0=\{v=\sum_{k\ge1}\alpha_ke_k;\sum_{k\ge 1}{{\alpha_k^2}/
k^2}<\infty \}$. The norm of $U_0$ is given by
$\|v\|^2_{U_0}=\sum_{k\ge 1}{\alpha_k^2}/k^2, v=\sum_{k\ge
1}\alpha_ke_k$. Then the embedding ${U}\hookrightarrow {U}_0$ is
Hilbert-Schmidt and the trajectories of $W$ are $\mathbb{P}$-a.s.
continuous with values in ${U}_0$. Note that
$$\int_0^t \psi(r)dW(r)$$
where $\psi \in L^2(\Omega; L_2({U},L^2(D)))$ is progressively
measurable, defines a $\mathbb{P}$-almost surely continuous
$L^2(\Omega)$ valued $\mathscr{F}_t$-martingale. Furthermore, we can
multiply the It\^{o}'s integral with a test-function since
$$\int_0^t\int_{D} \psi(r)\cdot\varphi dxdW(r)
=\sum_{k=1}^{\infty}\int_0^t\int_{D}\psi(r)e_k\cdot\varphi
dxd\beta_k(r),\,\varphi\in L^2(D)$$ is well-defined.

In this paper, the mapping $\Phi(z):{U}\rightarrow L^2(D)$ is
defined by $\Phi(z)e_k=g_k(z(\cdot))$, $\forall z\in L^2(D)$. We
assume that $g_k\in C(\mathbb{R}\times D)$ and satisfy the following
condition:
\begin{equation}\label{2.1}
\begin{split}
\sum_{k\geq1}|g_k(\xi)|\leq c(1+|\xi|),\, \forall \xi \in
\mathbb{R}^n,
\end{split}
\end{equation}
\begin{equation}\label{2.2}
\begin{split}
\sum_{k\geq1}|\nabla g_k(\xi)|^2\leq c,\, \forall \xi \in
\mathbb{R}^n,
\end{split}
\end{equation}
and additionally implies
\begin{equation}\label{2.3}
\begin{split}
\sup_{k\geq1}k^2|g_k(\xi)|^2\leq c(1+|\xi|^2),\, \forall \xi \in
\mathbb{R}^n.
\end{split}
\end{equation}

Now, we are ready to give a precise definition of the  martingale
weak solutions.
\begin{Definition}\label{def2.1}
Let  $\mu_0$, $\mu_f$ be Borel probability measures on
$L^2_{\text{div}}(D)$ and $L^2(Q)$ respectively. A system
$$((\Omega,\mathscr{F},\mathscr{F}_{t},\mathbb{P}),u,u_0,f,W)$$
is called a martingale weak solution to \eqref{1.1}, and $S$ satisfy
\eqref{1.2}, \eqref{1.3}, and \eqref{1.4} with the initial datum
$\mu_0$ and $\mu_f$  if the following conditions are satisfy:

(1) $(\Omega,\mathscr{F},\mathscr{F}_t,\mathbb{P})$ is a stochastic
basis with a complete right-continuous filtration,

(2) $W$ is an $\mathscr{F}_t$-cylindrical Wiener process,

(3) $u\in L^2(\Omega; L^{\infty}(0,T;L^2(D)))\cap L^q(\Omega;
L^{q}(0,T;W^{1,q}_{0,\text{div}}(D)))$ is progressively measurable,

(4) $u_0\in L^2(\Omega; L^{2}(D))$ with $\mu_0=\mathbb{P}\circ
u_0^{-1}$,

(5) $f\in L^2(\Omega; L^{2}({Q}))$ is adapted to
$\mathscr{F}_t$ and $\mu_f=\mathbb{P}\circ f^{-1}$,

(6) $\forall \varphi\in C_{0,\text{div}}^{\infty}(D)$ and $\forall
t\in [0,T]$, it holds that $\mathbb{P}$-a.s.
\begin{align*}
\int_{D}(u(t)-u_0)\cdot\varphi
dx\!&=\!\!\int_0^t\!\!\!\int_{D}u\otimes u
:\mathbb{D}(\varphi)-S(x,r,\mathbb{D}(u)):\mathbb{D}(\varphi) dxdr\\
&\quad+\!\int_0^t\!\!\!\int_{D}f\cdot \varphi dx dr
+\!\int_0^t\!\!\!\int_{D}\Phi(u)\cdot \varphi dxdW(r).
\end{align*}

\end{Definition}

Next, we state our main result.

\begin{Theorem}
Assume that $q>\frac{2n+2}{n+2}$, $S$ satisfies \eqref{1.2},
\eqref{1.3} and \eqref{1.4}. $\Phi$ satisfies \eqref{2.1} and
\eqref{2.2}. And further suppose that
\begin{equation}\label{2.4}
\begin{split}
\int_{L^2_{\rm{
div}}(D)}\|v\|^{\beta}_{L^2(D)}d\mu_0(v)<\infty,\quad
\int_{L^2(Q)}\|\mathbf{g}\|^{\beta}_{L^2(Q)}d\mu_f(\mathbf{g})<\infty
\end{split}
\end{equation}
with $\beta:=\max\{\frac{2n+2}{n},\frac{qn+2q}{n}\}$. Then there
exists a martingale weak solution to \eqref{1.1} in the sense of
Definition \ref{def2.1}.
\end{Theorem}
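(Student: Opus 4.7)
The plan is to build approximate solutions via a Galerkin scheme augmented by a high-order $u$-penalty, derive moment bounds uniform in the approximation parameters, use the Jakubowski--Skorokhod theorem on a path space with weak topology to realize the limit pathwise on a new stochastic basis, and finally identify the nonlinear limits by combining Wolf's $L^\infty$-truncation with a Minty-type monotonicity argument based on \eqref{1.4}. The pressure reconstruction of Section 3 is the essential device that allows testing with functions that are not divergence free, which is forced by the truncation step.

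First I would fix a basis $\{w_k\}\subset C_{0,\text{div}}^{\infty}(D)$ orthonormal in $L^2_{\text{div}}(D)$ and, for each $N$ and small $\varepsilon>0$, solve the finite-dimensional It\^o system associated with \eqref{1.1} plus the damping term $\varepsilon |u_N|^{r-2}u_N$ for $r$ large, as announced in Section 4. Applying It\^o's formula to $\|u_N\|_{L^2}^{2}$ and to its $\beta/2$-power and using coercivity \eqref{1.3}, the growth \eqref{2.1}, Young's inequality, and Burkholder--Davis--Gundy, one obtains
\[
\mathbb{E}\Big[\sup_{t\in[0,T]}\|u_N(t)\|_{L^2(D)}^{\beta}\Big] + \mathbb{E}\!\int_0^T\!\|\mathbb{D}(u_N)\|_{L^q(D)}^{q}\,dt \le C,
\]
with $C$ independent of $N$ and $\varepsilon$; the value of $\beta$ in \eqref{2.4} is precisely what is needed so that the convective term is equi-integrable after parabolic interpolation. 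The parameter $\varepsilon$ is sent to zero in an outer limit.

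Next I would establish tightness of the laws of $(u_N,W,u_0,f)$ on the product of $L^q(0,T;W^{1,q}_{0,\text{div}}(D))$ endowed with its weak topology and $C([0,T];(W^{k,2}(D))^\ast)$ for $k$ large, the temporal equicontinuity coming from fractional Sobolev estimates on the stochastic integral and on the pressure-decomposed deterministic drift. Since this topology is not metrizable, the classical Skorokhod embedding does not apply and I would invoke the Jakubowski--Skorokhod theorem to obtain a new stochastic basis $(\tilde\Omega,\tilde{\mathscr F},\tilde{\mathscr F}_t,\tilde{\mathbb P})$ and almost surely convergent copies $(\tilde u_N,\tilde u_0^N,\tilde f^N,\tilde W^N)\to(\tilde u,\tilde u_0,\tilde f,\tilde W)$ with the original joint laws. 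A martingale-representation argument identifies $\tilde W$ as an $\tilde{\mathscr F}_t$-cylindrical Wiener process; the cut-off trick signalled in the introduction shows that the approximate equation continues to hold on the new probability space, bypassing the abstract construction of \cite{MO}.

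The decisive difficulty, and where the bulk of the work concentrates, is the identification of the nonlinear limits: since $q$ is only required to exceed $\frac{2n+2}{n+2}$, neither $\tilde u_N\otimes\tilde u_N$ nor $S(\cdot,\mathbb{D}(\tilde u_N))$ has an obvious equi-integrable bound. I would therefore apply Wolf's $L^\infty$-truncation as in \cite{JW}: using the pressure decomposition of Section 3, split $\tilde u-\tilde u_N$ into a solenoidal field truncated at height $L$ plus a remainder of small norm, test the difference of the pressure-included equations with the truncated part, let $N\to\infty$ and then $L\to\infty$, and absorb the stochastic contributions via Burkholder--Davis--Gundy, to deduce
\[
\limsup_{N\to\infty}\tilde{\mathbb E}\!\int_0^T\!\!\!\int_D\bigl(S(\cdot,\mathbb{D}(\tilde u_N))-S(\cdot,\mathbb{D}(\tilde u))\bigr):\bigl(\mathbb{D}(\tilde u_N)-\mathbb{D}(\tilde u)\bigr)\theta_L\,dx\,dt=0
\]
for suitable cut-offs $\theta_L$. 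Strict monotonicity \eqref{1.4} together with a Minty argument then yields $\mathbb{D}(\tilde u_N)\to\mathbb{D}(\tilde u)$ a.e.\ on sets exhausting $Q$, identifying the weak limit of the stress and giving strong $L^1$-convergence of the convective term. Finally, \eqref{2.2} and Vitali's theorem yield $\Phi(\tilde u_N)\to\Phi(\tilde u)$ in the Hilbert--Schmidt norm, so that the stochastic integral passes to the limit and $(\tilde\Omega,\tilde{\mathscr F},\tilde{\mathscr F}_t,\tilde{\mathbb P},\tilde u,\tilde u_0,\tilde f,\tilde W)$ satisfies Definition \ref{def2.1}.
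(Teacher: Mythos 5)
Your proposal mirrors the paper's argument step for step: Galerkin approximation of an $\varepsilon$-regularized system with a high-order damping term, uniform moment estimates in the $\beta$-norm, Jakubowski--Skorokhod compactness on a path space with partly weak topologies (with the cut-off continuity argument replacing the abstract elementary method of \cite{MO} to transfer the approximate equations to the new probability space), and then the pressure decomposition of Section~3 feeding Wolf's $L^\infty$-truncation plus a Minty argument to identify the limit stress as $\varepsilon\to 0$. The one place you gloss over is that the paper's scheme actually runs the monotonicity argument twice and in two different ways: in the inner limit $N\to\infty$ the damping term makes the solution an admissible test function so Minty's trick works directly via It\^{o}'s formula without truncation, and the $L^\infty$-truncation is only deployed in the outer limit $m\to\infty$; but this is a matter of exposition, not a gap.
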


\section{Pressure Decomposition}
In the present section we are going to introduce a pressure method generalizes \cite{JW} to the stochastic case.
Here the pressure $p$ will be decomposed into four part $p_1$, $p_2$, $p_h$ and $p_\Phi$.
We show a-priori estimates for the components $p_1$, $p_2$, $p_h$ and $p_\Phi$.
\begin{Theorem}\label{theorem3.1}
Let $(\Omega, \mathscr{F}, \mathscr{F}_{t}, \mathbb{P})$ be a
stochastic basis, $v\in L^2(\Omega; L^\infty(0,T;L^2(D)))$ adapted
to $\mathscr{F}_t$. Assume $H_1+H_2\in L^\alpha(\Omega;
L^\alpha(Q))$ adapted to $\mathscr{F}_t$ for some $\alpha>1$,
$H_1\in L^{\alpha_1}(\Omega\times Q,\mathbb{P}\otimes
\mathcal{L}^{n+1})$ and $H_2,\nabla H_2\in L^{\alpha_2}(\Omega\times
Q,\mathbb{P}\otimes \mathcal{L}^{n+1})$. Moreover, let $v_0\in
L^2(\Omega; L^2_{\rm{div}}(D))$, and $\Phi\in L^2(\Omega;
L^\infty(0,T;L_2({U},L^2(D))))$ progressively measurable such that
\begin{equation}\label{3.1}
\begin{split}
\int_{D}(v(t)-v_0)\cdot\varphi
dx+\int_0^t\!\!\!\int_{D}(H_1+H_2):\nabla\varphi
dxdr=\int_0^t\!\!\!\int_{D}\Phi\cdot\varphi dx dW(r)
\end{split}
\end{equation}
holds for all $\varphi\in C_{0,\rm{div}}^\infty(D)$. Then there are
functions $p_1$, $p_2$, $p_h$ and $p_\Phi$ adapt to $\mathscr{F}_t$
such that

 $(1)$ $\Delta p_h=0$ and the following estimates are satisfied for
$\theta:=\min\{2,\alpha\}$:
\begin{align*} E\left(
\int_Q|p_1|^{\alpha_1}dxdt\right)^\beta&\leq cE\left(
\int_Q|H_1|^{\alpha_1}dxdt\right)^\beta,\\
E\left(\int_Q|p_2|^{\alpha_2}dxdt\right)^\beta&\leq cE\left(
\int_Q|H_2|^{\alpha_2}dxdt\right)^\beta,\\
E\left(\int_0^T\!\!\!\int_{D'}|\nabla
p_2|^{\alpha_2}dxdt\right)^\beta&\leq cE\left(
\int_Q|H_2|^{\alpha_2}+|\nabla H_2|^{\alpha_2}dxdt\right)^\beta,\\
E\left(\int_Q|p_1+p_2|^{\alpha}dxdt\right)&\leq cE\left(
\int_Q|H_1+H_2|^{\alpha}dxdt\right),\\
E\left(\sup_{t\in(0,T)} \int_{D}|p_\Phi|^{2}dx\right)&\leq cE\left(
\sup_{t\in(0,T)}\|\Phi\|^2_{L_2({U},L^2(D))}\right),
\end{align*}
\begin{align*}
E\left(\sup_{t\in(0,T)} \int_{D}|p_h|^{\theta}dx\right)&\leq
cE\bigg( 1+\sup_{t\in(0,T)} \int_{D}|v|^{2}dx
+\sup_{t\in(0,T)}\|\Phi\|^2_{L_2({U},L^2(D))}\\
&\quad+\int_{D}|v_0|^{2}dx+\int_Q|H_1+H_2|^{\alpha}dxdt\bigg),\\
E\left(\sup_{t\in(0,T)} \int_{D}|p_h|^{\theta}dx\right)^\beta&\leq
cE\left(\sup_{t\in(0,T)} \int_{D}|v|^{2}dx
+\sup_{t\in(0,T)}\|\Phi\|^2_{L_2({U},L^2(D))}\right)^\beta\\
&\quad+cE\left(1+\int_{D}|v_0|^{2}dx+\int_Q|H_1+H_2|^{\alpha}dxdt\right)^\beta,
\end{align*}
for all $1\leq\beta<\infty$ and $D'\subset\subset D$.

$(2)$ for all $\varphi\in C_0^\infty(D)$, it holds that
\begin{equation*}
\begin{split}
&\int_{D}(v(t)-v_0-\nabla p_h(t))\cdot\varphi dx+\int_0^t\!\!\!\int_{D}(H_1+H_2):\nabla\varphi dxdr\\
&=\int_0^t\!\!\!\int_{D}(p_1+p_2){\rm{div}\varphi}
dxdr+\int_{D}p_\Phi(t){\rm{div}\varphi}
dx+\int_0^t\!\!\!\int_{D}\Phi \cdot\varphi dxdW(r).
\end{split}
\end{equation*}
Moreover, we have $p(t)=p_h(t)+p_\Phi(t)+\int_0^t(p_1+p_2)dr\in
L^\theta(\Omega;L^\infty(0,T;L^\theta(D)))$ and
$p_1(0)=p_2(0)=p_h(0)=p_\Phi(0)=0$ $\mathbb{P}$-a.s..
\end{Theorem}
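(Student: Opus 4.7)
The strategy is to adapt Wolf's deterministic pressure reconstruction \cite{JW} to the stochastic equation \eqref{3.1}. Formally taking the divergence of \eqref{3.1} splits the total pressure $p(t)$ into contributions from $H_1$, $H_2$, and the stochastic integral $M(t):=\int_0^t\Phi(r)\,dW(r)$, plus a harmonic remainder $p_h$ that reconciles boundary effects. Accordingly, I would define $p_1,p_2$ as solutions of Dirichlet--Laplace problems driven by $\mathrm{div}\,\mathrm{div}\,H_i$, let $p_\Phi$ be the potential part of the Helmholtz decomposition of $M$, and set $p_h$ to be the harmonic correction determined by the remaining data. The identity in part~(2) then follows from an application of de Rham's theorem pointwise in $(\omega,t)$ combined with the definitions of the four components.

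\textbf{Construction and estimates.} For each $(\omega,t)$, I set $p_i(t)$ to be the solution of
\begin{equation*}
-\Delta p_i(t) = -\partial_j\partial_k(H_i)_{jk}(t) \text{ in } D, \qquad p_i(t)=0 \text{ on }\partial D, \qquad i=1,2.
\end{equation*}
Calder\'{o}n--Zygmund $L^q$-theory on Lipschitz domains then delivers $\|p_i(t)\|_{L^{\alpha_i}}\leq c\|H_i(t)\|_{L^{\alpha_i}}$, from which the first, second and fourth displayed inequalities follow by raising to the $\beta$-th power and integrating in $(\omega,t)$. The interior gradient bound on $p_2$ uses the extra hypothesis $\nabla H_2\in L^{\alpha_2}$, which places $-\mathrm{div}\,\mathrm{div}\,H_2=-\partial_j(\partial_k(H_2)_{jk})$ in $W^{-1,\alpha_2}$ and triggers standard interior $W^{1,\alpha_2}$-regularity. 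The stochastic pressure $p_\Phi$ comes from the Helmholtz decomposition $M(t)=M_{\mathrm{div}}(t)+\nabla p_\Phi(t)$ applied pointwise in $\omega$; the Burkholder--Davis--Gundy inequality for the $L^2(D)$-valued continuous $\mathscr{F}_t$-martingale $M$, coupled with the trivial bound $\int_0^t\|\Phi\|_{L_2(U,L^2(D))}^2\,dr\leq T\sup_t\|\Phi\|^2_{L_2(U,L^2(D))}$, gives the fifth displayed bound. Adaptedness and path continuity of $p_\Phi$ are inherited from $M$ since the Helmholtz projection is a deterministic continuous linear operator on $L^2(D)$; adaptedness of $p_1,p_2$ is immediate.

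\textbf{Harmonic remainder and main obstacle.} With $p_1,p_2,p_\Phi$ in hand, I apply de Rham's theorem pointwise in $(\omega,t)$ to \eqref{3.1} to obtain a total pressure $p(t)$, and then set
\begin{equation*}
p_h(t) := p(t) - p_\Phi(t) - \int_0^t(p_1+p_2)\,dr.
\end{equation*}
Harmonicity $\Delta p_h=0$ reduces to checking that the Laplacians of the subtracted terms absorb $\Delta p$ in the interior (which in turn uses the defining PDEs of $p_1,p_2$ and the Helmholtz decomposition of $M$). The supremum-in-time bounds on $p_h$ then follow by rearranging the above identity and invoking the already-established bounds on $p,p_\Phi,p_1,p_2$ together with the a priori energy estimates on $v$, $v_0$ and $\Phi$. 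The principal obstacle is stochastic: one must verify that the Helmholtz projection applied to the $L^2(D)$-valued martingale $M$ yields an $\mathscr{F}_t$-adapted, almost surely continuous process $p_\Phi$ satisfying the stated supremum estimate, and that the de Rham construction of the total pressure $p(t)$ can be carried out in a jointly measurable way in $(\omega,t)$ so that $p_h$ inherits adaptedness. The arbitrary $\beta$-th moment estimates finally require the general-moment form of BDG applied to the martingale part before pressure decomposition.
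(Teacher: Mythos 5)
Your overall plan is structurally close to the paper's: reconstruct a total pressure $p(t)$ via de Rham's theorem, peel off deterministic pieces $p_1,p_2$ driven by $H_1,H_2$ and a stochastic piece $p_\Phi$ driven by the martingale, and let $p_h$ be the harmonic remainder. But the two specific operators you propose to build $p_1,p_2,p_\Phi$ do not do the job on a Lipschitz domain, which is the setting assumed throughout.

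The decisive step in your argument is the claim that solving the very-weak Dirichlet problem $-\Delta p_i=-\partial_j\partial_k(H_i)_{jk}$, $p_i|_{\partial D}=0$, with $H_i\in L^{\alpha_i}(D)$, yields $\|p_i\|_{L^{\alpha_i}}\le c\|H_i\|_{L^{\alpha_i}}$ ``by Calder\'on--Zygmund $L^q$-theory on Lipschitz domains.'' This estimate is in general \emph{false} on Lipschitz domains for $\alpha_i$ away from $2$: it is dual to $W^{2,\alpha_i'}$-solvability of the Dirichlet Laplacian, which is known to break down for merely Lipschitz boundary (the Jerison--Kenig range is a neighbourhood of $2$, not all of $(1,\infty)$). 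This is exactly the obstruction Wolf \cite{JW} works around and the reason the paper does not use the Dirichlet Laplacian at all. The paper instead decomposes $p$ through the bi-Laplace solution operator $\Delta_D^{-2}$ (zero Dirichlet data for both the function and its gradient) and uses M\"uller's result \cite{RM} that $\Delta\Delta_D^{-2}\Delta$ and $\nabla\Delta_D^{-2}\Delta$ are bounded on $L^q(D)$ for all $1<q<\infty$ on bounded Lipschitz domains. Concretely, $p_1,p_2$ are characterized as the unique elements of $\Delta W_0^{2,\alpha_i}(D)$ solving $\int_D p_i\Delta\varphi\,dx=\int_D H_i:\nabla^2\varphi\,dx$ for $\varphi\in C_0^\infty(D)$, and $p_\Phi=\int_0^t\mathcal{D}^*\Phi\,dW$ with $\mathcal{D}=\nabla\Delta_D^{-2}\Delta$, which is \emph{not} the Helmholtz projector (the latter would impose a Neumann-type condition and live in a different uniqueness class). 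Substituting Helmholtz/Dirichlet operators therefore changes the decomposition and, more importantly, loses the Lipschitz-robust $L^q$ bounds you need for the first four estimates and hence also for $p_h$.

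There is a second, smaller gap: you correctly flag that one must carry out the de Rham reconstruction of the total pressure $p(t)$ in a jointly $(\omega,t)$-measurable, $\mathscr{F}_t$-adapted way so that $p_h$ inherits adaptedness, but you leave it as an unresolved ``obstacle.'' The paper resolves it by passing through the Bogovski\u{i} operator $\mathcal{B}=\mathrm{Bog}_D$ to obtain the explicit, progressively measurable representation
\begin{equation*}
p(t)=\mathcal{B}^*(v(t)-v_0)+\int_0^t(\nabla\mathcal{B})^*(H_1+H_2)\,dr-\int_0^t\mathcal{B}^*\Phi\,dW(r),
\end{equation*}
to which the Burkholder--Davis--Gundy inequality is applied directly. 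This single formula both establishes adaptedness and delivers the $L^\theta(\Omega;L^\infty(0,T;L^\theta(D)))$ bound on $p$ that your proposal postpones. I would rewrite the construction of $p_1,p_2,p_\Phi$ around the bi-Laplace operator and insert the Bogovski\u{i} representation of $p$ before attempting the harmonicity and moment bounds.
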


\begin{proof}
Let $v$ be a weak solution to \eqref{3.1} for all $\varphi\in
W_{0,\text{div}}^{1,\theta'}(D),1/\theta+1/\theta'=1$. Then by De
Rahm's theorem (see \cite{GPG}), there exists a unique function
$p(t)\in L_0^\theta(D)$ with $p(0)=0$, such that
\begin{align*}
\int_{D}(v(t)-v_0)\cdot\varphi dx-\int_{D}p(t){\rm{div}}\varphi
dx+\int_0^t\!\!\!\int_{D}(H_1+H_2):\nabla\varphi
dxdr=\int_0^t\!\!\!\int_{D}\Phi\cdot\varphi dx dW(r),
\end{align*} for all
$\varphi\in W_{0}^{1,\theta'}(D)$.

By using the Bogovski\u{i}-operator $\text{Bog}_D$ (see \cite{MEB})
and let $\mathcal{B}=Bog_{D}(\varphi-(\varphi)_D)$ where
$(\varphi)_D=\frac{1}{|D|}\int_{D}\varphi dx$, then we can get
\begin{align*}\int_{D}p(t)\varphi
dx\!=\!\!\int_{D}(v(t)-v_0)\cdot\mathcal{B}(\varphi)dx\!+\!\!\int_0^t\!\!\!\int_{D}(H_1\!+\!H_2):\nabla\mathcal{B}(\varphi)dxdr
\!-\!\!\int_0^t\!\!\!\int_{D}\!\Phi\cdot\mathcal{B}(\varphi)dx
dW(r).
\end{align*}
Hence, we have
$$p(t)=\mathcal{B}^*(v(t)-v_0)+\int_0^t(\nabla\mathcal{B})^*(H_1+H_2)dr-\int_0^t\mathcal{B}^*\Phi dW(r),$$
where $\mathcal{B}^*$ denotes the adjoint of $\mathcal{B}$ with
respect to the $L^2(D)$ inner product.

Since $\theta:=\min\{2,\alpha\}$, using the continuity of
$\mathcal{B}^*$
on $L^2(D)$, $(\nabla B)^*$ on $L^\alpha(D)$ and the 
Burkholder-Davis-Gundy inequality, one has

\begin{equation}\label{3.2}
\begin{split}
E\!\left(\sup_{(0,T)}\!\int_{D}|p|^\theta \right)\!&\lesssim\!
E\!\left(\sup_{(0,T)}\!\int_{D}\!|\mathcal{B}^*(v(t)-v_0)|^2
\!+\!\int_Q\!|(\nabla\mathcal{B})^*(H_1+H_2)|^\alpha\!+\!\int_0^t\!(\mathcal{B}^*\Phi)^2dW(r)\right)\\
&\lesssim\!
E\!\left(1\!+\!\sup_{(0,T)}\int_{D}|v|^2\!+\!|v_0|^2+\int_Q|H_1+H_2|^\alpha
\!+\!\int_0^T\|\Phi\|^2_{L_2({U},L^2(D))}dt\right).
\end{split}
\end{equation}
Then $p\in L^\theta(\Omega; L^\infty(0,T;L^\theta(D)))$.

Let $\Delta_D^{-2}$ be the solution operator to the bi-Laplace
equation with respect to zero boundary values for function and
gradient. Let $p_0=\Delta\Delta_D^{-2}\Delta p$ and $p_h=p-p_0$.
Using the continuity of the operator $\Delta\Delta_D^{-2}\Delta$
from $L^\theta(D)$ to $L^\theta(D)$ (see \cite{RM}), we have
\begin{equation}\label{3.3}
\begin{split}
&E\left(\sup_{t\in(0,T)}\int_{D}|p_0|^\theta dx\right)\\
&\lesssim
E\left(1+\sup_{t\in(0,T)}\int_{D}|v|^2dx+\int_{D}|v_0|^2dx+\int_Q|H_1+H_2|^\alpha
dxdt+\int_0^T\|\Phi\|^2_{L_2({U},L^2(D))}dt\right).
\end{split}
\end{equation}
\begin{equation}\label{3.4}
\begin{split}
&E\left(\sup_{t\in(0,T)}\int_{D}|p_h|^\theta dx\right)\\
&\lesssim
E\left(1+\sup_{t\in(0,T)}\int_{D}|v|^2dx+\int_{D}|v_0|^2dx+\int_Q|H_1+H_2|^\alpha
dxdt+\int_0^T\|\Phi\|^2_{L_2({U},L^2(D))}dt\right).
\end{split}
\end{equation}

Note that $p_0(t)\in \Delta W_0^{2,\theta}(D)$ is uniquely
determined as the solution to the following equation:
\begin{equation}\label{3.5}
\begin{split}
\int_{D}p_0(t)\Delta\varphi
dx=\int_0^t\!\!\!\int_{D}(H_1+H_2):\nabla^2\varphi
dxdr-\int_0^t\!\!\!\int_{D}\Phi\cdot\nabla\varphi dx dW(r),
\end{split}
\end{equation}
for all $\varphi\in C_0^\infty(D)$.

From \cite{RM}, we know that $p_1\in\Delta W_0^{2,\alpha_1}(D)$ and
$p_2\in \Delta W_0^{2,\alpha_2}(D)$ are the unique solutions
(defined $\mathbb{P}\otimes \mathcal{L}^1$-a.e.) such that
\begin{equation}\label{3.6}
\begin{split}
\int_{D}p_1(t)\Delta\varphi dx=\int_DH_1:\nabla^2\varphi dx,
\end{split}
\end{equation}
\begin{equation}\label{3.7}
\begin{split}
\int_{D}p_2(t)\Delta\varphi dx=\int_DH_2:\nabla^2\varphi dx,
\end{split}
\end{equation}
for all $\varphi\in C_0^\infty(D)$. Then we have
$$\int_{D}(p_1(t)+p_2(t))\Delta\varphi dx=\int_D(H_1+H_2):\nabla^2\varphi dx,$$
for all $\varphi\in C_0^\infty(D)$ and $p_1+p_2\in\Delta
W_0^{2,\tilde{q}}(D)$. From Lemma 2.3 in \cite{JW}, it follows that
$$\int_D|p_1|^{\alpha_1}dx\leq c \int_D|H_1|^{\alpha_1}dx,$$
$$ \int_D|p_2|^{\alpha_2}dx\leq c \int_D|H_2|^{\alpha_2}dx,$$
$$ \int_D|p_1+p_2|^{\alpha}dx\leq c \int_D|H_1+H_2|^{\alpha}dx\quad \mathbb{P}\otimes\mathcal{L}^1-a.e..$$
These imply
$$E\left(\int_Q|p_1|^{\alpha_1}dxdt\right)^\beta\leq cE\left(\int_Q|H_1|^{\alpha_1}dxdt\right)^\beta,$$
$$E\left(\int_Q|p_2|^{\alpha_2}dxdt\right)^\beta\leq cE\left(\int_Q|H_2|^{\alpha_2}dxdt\right)^\beta,$$
$$E\left(\int_0^T\int_{D^\prime}|\nabla p_2|^{\alpha_2}dxdt\right)^\beta\leq cE\left(\int_Q|H_2|^{\alpha_2}+|\nabla H_2|^{\alpha_2}dxdt\right)^\beta,$$
$$E\left(\int_Q|p_1+p_2|^{\alpha}dxdt\right)\leq cE\left(\int_Q|H_1+H_2|^{\alpha}dxdt\right).$$

Let $p_\Phi:=p_0(t)-\int_0^t(p_1+p_2)dr \in \Delta
W_0^{2,\theta}(D)$. From \eqref{3.5}, \eqref{3.6} and \eqref{3.7},
it follows that $p_\Phi$ is the unique solution to
$$\int_{D}p_\Phi(t)\Delta\varphi dx=\int_0^t\!\!\!\int_{D}\Phi\cdot\nabla\varphi dx dW(r),$$
for all $\varphi\in C_0^\infty(D)$. Since $p_{\Phi}(t)\in \Delta
W_0^{2,\theta}(D)$, by Weyl's Lemma, for all $\varphi\in
C_0^\infty(D)$, we have
$$\int_{D}p_\Phi(t)\varphi dx=\int_0^t\!\!\!\int_{D}\Phi\cdot\nabla(\Delta^{-2}\Delta\varphi) dx dW(r).$$
Then $p_\Phi=\int_0^t\mathcal{D}^*\Phi dW(r)$,
$\mathbb{P}\otimes\mathcal{L}^{n+1}$-a.e., where
$\mathcal{D}=\nabla\Delta_D^{-2}\Delta:L^2(D)\rightarrow
W_0^{1,2}(D)$, $\mathcal{D}^*:L^2(D)\rightarrow L^2(D)$. Using the
 Burkholder-Davis-Gundy inequality, we obtain
\begin{equation}\label{3.8}
\begin{split}
E\left(\sup_{t\in(0,T)}\int_{D}|p_\Phi|^2 dx\right)
&\leq cE\left(\sup_{t\in(0,T)}\|\mathcal{D}^*\Phi\|^2_{L_2(U,L^2(D))}dt\right)\\
&\leq cE\left(\sup_{t\in(0,T)}\|\Phi\|^2_{L_2(U,L^2(D))}dt\right).
\end{split}
\end{equation}

Finally, we can infer that
$\tilde{p}_0(t):=p_\Phi(t)+\int_0^t(p_1+p_2)dr$ solves \eqref{3.5}
and there holds $\tilde{p}_{\Phi}(t)\in \Delta W_0^{2,\theta}(D)$
which implies $p_0(t):=p_\Phi(t)+\int_0^t(p_1+p_2)dr$. Then, we get
the equation claimed in (2) of Theorem \ref{theorem3.1}.

\end{proof}

\begin{Corollary}
Let the assumptions of Theorem 3.1 be satisfied. There exists
$\Phi_p\in L^2(\Omega; L^\infty(0,T;L_2({U},L^2_{loc}(D))))$
progressively measurable such that
$$\int_{D}p_\Phi(t){\rm{div}}\varphi dx=\int_0^t\!\!\!\int_{D}\Phi_p\cdot\varphi dx dW(r),\; \forall \varphi\in C_0^\infty(D).$$
Let $D'\subset\subset D$, then $\Phi_p$ satisfies $\|\Phi_p
e_k\|_{L^2(D')}\leq c(D')\|\Phi e_k\|_{L^2(D)}$, $\forall k$, that
is, it holds that $\mathbb{P}\otimes \mathcal{L}^1$-a.e.
$$\|\Phi_p \|_{L_2({U},L^2(D'))}\leq c(D')\|\Phi\|_{L_2({U},L^2(D))}.$$
If we assume that $\Phi$ satisfies \eqref{2.1} and \eqref{2.2}, then there holds
$$\|\Phi_p(v_1)- \Phi_p(v_2)\|_{L_2({U},L^2(D'))}\leq c(D')\|v_1-v_2\|_{L^2(D)},\,\forall v_1, v_2\in
L^2(D).$$
\end{Corollary}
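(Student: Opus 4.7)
The plan is to read off the local structure of $p_\Phi$ from the representation $p_\Phi(t)=\int_0^t\mathcal{D}^*\Phi\,dW(r)$ obtained in the proof of Theorem \ref{theorem3.1}, combined with its defining relation
\[
\int_D(\mathcal{D}^*g)\,\Delta\varphi\,dx=\int_D g\cdot\nabla\varphi\,dx,\qquad \forall\,\varphi\in C_0^\infty(D),\;g\in L^2(D).
\]
Setting $g_k:=\Phi e_k$ and $\psi_k:=\mathcal{D}^*(\Phi e_k)\in L^2(D)$, this relation identifies each $\psi_k$ as a distributional solution of $-\Delta\psi_k=\nabla\cdot g_k$ in $D$. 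This is the elliptic equation from which I would extract the gradient needed to build $\Phi_p$.

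The central analytic step is interior elliptic regularity. For $D'\subset\subset D$, the standard Caccioppoli / interior $W^{1,2}$-estimate for Poisson's equation with divergence-form $L^2$ right-hand side gives $\psi_k\in W^{1,2}(D')$ with
\[
\|\nabla\psi_k\|_{L^2(D')}\leq c(D')\bigl(\|\psi_k\|_{L^2(D)}+\|g_k\|_{L^2(D)}\bigr)\leq c(D')\|\Phi e_k\|_{L^2(D)},
\]
the last inequality using continuity of $\mathcal{D}^*$ on $L^2(D)$ recorded in the proof of Theorem \ref{theorem3.1}. I then define $\Phi_p\,e_k:=-\nabla\psi_k$ on $D'$; summing in $k$ yields $\|\Phi_p\|_{L_2(U,L^2(D'))}\leq c(D')\|\Phi\|_{L_2(U,L^2(D))}$ $\mathbb{P}\otimes\mathcal{L}^1$-a.e., and the assumed integrability of $\Phi$ then promotes this to $\Phi_p\in L^2(\Omega;L^\infty(0,T;L_2(U,L^2_{\text{loc}}(D))))$. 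Progressive measurability of $\Phi_p$ is inherited from that of $\Phi$ since $g\mapsto -\nabla\mathcal{D}^*g$ is a bounded linear operator $L^2(D)\to L^2(D')$.

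For the stochastic identity, fix $\varphi\in C_0^\infty(D)$ and choose $D'$ with $\text{supp}\,\varphi\subset D'\subset\subset D$. Expanding $p_\Phi(t)=\sum_k\int_0^t\psi_k\,d\beta_k(r)$ and integrating by parts on $D'$ (legal because $\varphi$ has compact support there and $\nabla\psi_k\in L^2(D')$) gives
\[
\int_D p_\Phi(t)\,\text{div}\,\varphi\,dx=-\sum_k\int_0^t\!\!\int_{D'}\nabla\psi_k\cdot\varphi\,dx\,d\beta_k(r)=\int_0^t\!\!\int_D\Phi_p\cdot\varphi\,dx\,dW(r),
\]
which is the claimed representation. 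For the Lipschitz bound under \eqref{2.1}--\eqref{2.2}, linearity of $g\mapsto -\nabla\mathcal{D}^*g$ gives $\|\Phi_p(v_1)-\Phi_p(v_2)\|_{L_2(U,L^2(D'))}^2\leq c(D')^2\sum_k\|g_k(v_1)-g_k(v_2)\|_{L^2(D)}^2$, and the mean value inequality combined with the summability \eqref{2.2} bounds the sum by $c\|v_1-v_2\|_{L^2(D)}^2$.

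The main obstacle -- already reflected in the $L^2_{\text{loc}}$ formulation of the statement -- is that $\nabla p_\Phi$ has no global meaning: the interior constant $c(D')$ necessarily degenerates as $D'\uparrow D$ because the source $\nabla\cdot\Phi$ only lies in $W^{-1,2}$ with no control up to $\partial D$. Hence every step -- the pointwise definition of $\Phi_p\,e_k$, the integration by parts, and the Hilbert--Schmidt summation in $k$ -- has to be performed on an arbitrary but fixed interior subdomain, and interior elliptic regularity for $-\Delta\psi=\nabla\cdot g$ with $g\in L^2$ is the crucial analytic input on which the whole argument rests.
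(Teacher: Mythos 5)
Your argument is correct and lands on the same $\Phi_p$ as the paper: both identify $\Phi_p\,e_k=\nabla\Delta\Delta_D^{-2}\operatorname{div}(\Phi e_k)=-\nabla\mathcal{D}^*(\Phi e_k)$, and both read off the stochastic identity from the representation $p_\Phi=\int_0^t\mathcal{D}^*\Phi\,dW$ established in the proof of Theorem~\ref{theorem3.1}. The one place where you genuinely diverge is in how the interior estimate is justified. The paper simply cites the local regularity theory for the fourth-order bi-Laplace operator from \cite{RM}, applied to $\varphi_k=\Delta_D^{-2}\operatorname{div}(\Phi e_k)\in W_0^{2,2}(D)$, to obtain $\varphi_k\in W^{3,2}_{\rm loc}(D)$ and hence $\Phi_p e_k=\nabla\Delta\varphi_k\in L^2(D')$ with a constant depending on $D'$. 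You instead note that $\psi_k:=\mathcal{D}^*(\Phi e_k)=-\Delta\varphi_k\in L^2(D)$ already solves the second-order divergence-form Poisson equation $-\Delta\psi_k=\operatorname{div}(\Phi e_k)$ in the distributional sense, and invoke the elementary interior Caccioppoli estimate for Poisson's equation together with the global $L^2$-boundedness of $\mathcal{D}^*$. This factors the fourth-order local regularity into a global second-order bound plus second-order interior regularity, which is more self-contained and avoids quoting bi-Laplace local theory; both routes give the same $c(D')$ and degenerate in the same way as $D'\uparrow D$. The remaining steps --- integration by parts over $D'$ to recover the stochastic identity, linearity of $g\mapsto-\nabla\mathcal{D}^*g$ for progressive measurability and for the Lipschitz bound via \eqref{2.2} --- match the paper's intent and are correctly carried out, though you should resolve the notational collision between $g_k:=\Phi e_k$ (a generic Hilbert--Schmidt component) and the Lipschitz coefficient functions $g_k(\cdot)$ from \eqref{2.1}--\eqref{2.2}, and, when applying \eqref{2.2}, use the integral form of the mean value theorem so that the intermediate points can be integrated out before summing in $k$.
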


\begin{proof}
From the proof of Theorem 3.1, it follows that
\begin{equation}\label{3.9}
\begin{split}
\int_{D}p_\Phi(t){\rm{div}}\varphi dx&=\int_0^t\!\!\!\int_{D}\Phi\cdot\nabla(\Delta^{-2}\Delta{\rm{div}}\varphi) dx dW(r)\\
&=\sum_k\int_0^t\!\!\!\int_{D}\Phi e_k\cdot\nabla(\Delta^{-2}\Delta{\rm{div}}\varphi) dx d\beta_k\\
&=\sum_k\int_0^t\!\!\!\int_{D}\nabla\Delta\Delta^{-2}{\rm{div}}\Phi e_k \cdot\varphi dxd\beta_k\\
&=\int_0^t\!\!\!\int_{D}\nabla\Delta\Delta^{-2}{\rm{div}}\Phi
\cdot\varphi dxdW(r),
\end{split}
\end{equation}
for all $\varphi\in C_0^\infty(D)$. Let
$\Phi_p=\nabla\Delta\Delta^{-2}\rm{div}\Phi$. Then we can get the
first claim. By using the local regularity theory for the bi-Laplace
equation in \cite{RM}, we can prove the rest results.
\end{proof}

\section{The approximated System}
Let us consider the following approximate system:
\begin{equation}\label{4.1}
\begin{cases}
du+\nabla\cdot(u\otimes u-S+p\text{I})dt+\varepsilon|u|^{\tilde{q}-2}udt=fdt+\Phi(u)dW,\\
u|_{t=0}=u_0,
\end{cases}
\end{equation}
for $\varepsilon>0$, depending on the law $\mu_0$ on
$L^2_{\text{div}}(D)$ and $\mu_f$ on $L^2(Q)$.

Assume that $f$ is adapted to 
$\mathscr{F}_t$ (otherwise enlarge it) and $f\in
L^2(\Omega;L^2_{\text{div}}(Q))$ with $\mu_f=\mathbb{P}\circ f^{-1}$
and $u_0\in L^2(\Omega;L^2_{\text{div}}(D))$ with
$\mu_0=\mathbb{P}\circ u_0^{-1}$. For the purpose of control the
nonlinear term $u\otimes u:\nabla u$, we add the term
$\varepsilon|u|^{\tilde{q}-2}u$  and choose $\tilde{q}\geq\max
\{2q',3\}$ such that the solution is an admissible test function.
Notice that $\frac{2}{\tilde{q}}+\frac{1}{p}\leq1$ and
$\frac{1}{\tilde{q}-1}+\frac{1}{2}\leq1$.  Let
$$V_{q,\tilde{q}}=L^2(\Omega;L^\infty(0,T;L^2(D)))\cap L^{\tilde{q}}(\Omega\times Q; \mathbb{P}\otimes \mathcal{L}^{n+1})\cap L^q (\Omega; L^q(0,T;W_{0,\text{div}}^{1,q}(D))).$$

From the appendix of \cite{JMJNMRMR}, we know that there exist a
sequence $\{\lambda_k\}\subset \mathbb{R}$ and a sequence of
functions $\{w_k\}\subset W_{0,\text{div}}^{\ell,2}(D)$, $\ell\in
\mathbb{N}$ such that

 (a) $w_k$ is an eigenvector
to the eigenvalue $\lambda_k$ of the Stokes-operator in the sense
that
$$\langle w_k, \varphi\rangle_{W_0^{\ell,2}}=\lambda_k\int_Dw_k\cdot\varphi dx,\, \forall \varphi\in W_{0,\text{div}}^{\ell,2}(D),$$

(b) $\int_D w_kw_m dx=\delta_{km},\, \forall k,m\in \mathbb{N},$

(c) $1\leq\lambda_1\leq\lambda_2\leq\cdots$ and
$\lambda_k\rightarrow\infty$,

(d)
$\langle\frac{w_k}{\sqrt{\lambda_k}},\frac{w_m}{\sqrt{\lambda_m}}\rangle_{W_0^{\ell,2}}=\delta_{km}$,\,
$\forall k,m\in \mathbb{N},$

 (e) $\{w_k\}$ is a basis of $W_{0,\text{div}}^{\ell,2}(D).$

Now, we use Galerkin approximation to separate space and time. Then
approximate equations \eqref{4.1} becomes an ordinary stochastic
differential equation. By using the classical existence theorems for
SDEs from \cite{LA}, \cite{AF1} and \cite{AF2}, we can prove the
existence of approximated solution. To this end, choosing
$\ell>1+\frac{n}{2}$, such that $W_0^{\ell,2}(D)\hookrightarrow
W^{1,\infty}(D)$. We are finding an approximated solution:
$$u^N=\sum_{k=1}^N c_k^Nw_k=C^N\cdot w^N,$$ where
$C^N=(c_i^N):\Omega\times (0,T)\rightarrow \mathbb{R}^N$ and
$w^N=(w_1,w_2,\cdots,w_N)$.

Let $\mathcal{P}^N: L^2_{\text{div}}(D)\rightarrow
\mathcal{X}:=\text{span}\{w_1,w_2,\cdots,w_N\}$ be the orthogonal
projection, i.e.,
$$\mathcal{P}^Nv=\sum_{k=1}^N\langle v,w_k\rangle_{L^2}\cdot w_k.$$
Therefore,  we would like to solve the system
\begin{equation}\label{4.2}
\begin{split}
&\int_{D}du^N\cdot
w_kdx+\int_{D}S(x,t,\mathbb{D}(u^N)):\mathbb{D}(w_k)dxdt+\varepsilon\int_{D}|u^N|^{\tilde{q}-2}u^N\cdot
w_k dxdt\\&=\int_{D}u^N\otimes u^N :\nabla w_kdxdt
+\int_{D}f\cdot w_k  dxdt+\int_{D}\Phi(u^N)\cdot w_k dxdW(t),\\
&u^N(0)=\mathcal{P}u_0,
\end{split}
\end{equation}
$\mathbb{P}$-a.s. for $k=1,2,\cdots, N$ and for a.e. $t$.

Assume that $W^N(s)=\sum_{k=1}^N \beta_k e_k(s)=\beta^N(s)\cdot
e^N$. Then it turned out to solving the following ordinary
stochastic differential equation:
\begin{equation}\label{4.3}
\begin{cases}
dC^N=A(t,C^N)dt+B(C^N)d\beta_t^N,\\
C^N(0)=C_0,
\end{cases}
\end{equation}
where
\begin{align*}
&A(t,C^N)\!=\!\left(-\int_{D}S(x,t,C^N\cdot \mathbb{D}(w^N)):\mathbb{D}(w_k)dx+\int_{D}(C^N\cdot w^N)\otimes(C^N\cdot w^N):\nabla w_kdx\right)_{k=1}^N\\
&\qquad\qquad\quad-\left(\varepsilon\int_{D}|C^N\cdot
w^N|^{\tilde{q}-2}(c^N\cdot w^N)\cdot w_k
dxdt\right)_{k=1}^N+\left(\int_{D}f\cdot w_k dx\right)_{k=1}^N,\\
&B(C^N)\!=\!\left(\int_{D}\Phi(C^N\cdot w^N)e_l\cdot
w_kdx\right)_{k,l=1}^N,\\
&C_0=\left(\langle v_0,w_k\rangle_{L^2(D)}\right)_{k=1}^N.
\end{align*}

In order to make use of the classical existence theorems for SDEs,
we need to prove that $A$ and $B$ satisfy globally Lipschitz
continuous condition and growth condition in the following. Note
that
\begin{equation*}
\begin{split}
(A&(t,C^N)-A(t,\hat{C}^N))\cdot(C^N-\hat{C}^N)\\
&=-\int_{D}(S(x,t,\mathbb{D}(u^N))-S(x,t,\mathbb{D}(\hat{u}^N))):(\mathbb{D}(u^N)-\mathbb{D}(\hat{u}^N))dx\\
&\quad+\int_{D}(u^N\otimes u^N-\hat{u}^N\otimes \hat{u}^N):(\mathbb{D}(u^N)-\mathbb{D}(\hat{u}^N))dx\\
&\quad-\varepsilon\int_{D}(|u^N|^{\tilde{q}-2}u^N-|\hat{u}^N|^{\tilde{q}-2}\hat{u}^N)(u^N-\hat{u}^N)dx\\
&\leq\int_{D}(u^N\otimes u^N-\hat{u}^N\otimes
\hat{u}^N):(\mathbb{D}(u^N)-\mathbb{D}(\hat{u}^N))dx.
\end{split}
\end{equation*}
Here we have used the monotonicity assumption \eqref{1.4}. If
$|C^N|\leq R$ and $|\hat{C}^N|\leq R$, then
$$(A(t,C^N)-A(t,\hat{C}^N))\cdot(C^N-\hat{C}^N)\leq c(R,N)|C^N-\hat{C}^N|^2.$$
This implies weak monotonicity in the sense of (3.1.3) in
\cite{CPMR} by using Lipschitz continuity $B$ for $C^N$, cf
\eqref{2.1} and \eqref{2.2}. By virtue of $\int_{D}u^N\otimes u^N:
\mathbb{D}(u^N)dx=0$, \eqref{1.3} and H\"{o}lder's inequality, we
have
\begin{align*}
A(t,C^N)\cdot C^N&=-\int_{D}S(x,t,\mathbb{D}(u^N)):\mathbb{D}(u^N)dx
-\varepsilon\int_{D}|u^N|^{\tilde{q}}dx+\int_{D}f(t)\cdot u^N dx\\
&\leq\int_{D}\eta_2 dx+\int_{D}f(t)\cdot u^N dx\\
&\leq c(1+\|f(t)\|_2\|v^N\|_2)\\
&\leq c(1+\|f(t)\|_2)(1+\|C^N\|_2).
\end{align*}
By using H\"{o}lder's inequality and \eqref{2.1}, one has
$$A(t,C^N)\cdot C^N+|B(C^N)|^2\leq c(1+\|f(t)\|_2)(1+|C^N|^2).$$
Since $\int_0^t(1+\|f(t)\|_2)dt<\infty$ $\mathbb{P}$-a.s., this
yields weak growth condition in the sense of (3.1.4) in \cite{CPMR}.
Then we obtain a unique strong solution $C^N\in
L^2(\Omega;C^0([0,T]))$ to the SDE \eqref{4.3}.

Next, we will get a priori estimate.
\begin{Lemma}\label{lemma4.1}
Under the assumption of \eqref{1.2}, \eqref{1.3} and \eqref{1.4} with $q\in (1,\infty)$, \eqref{2.1}, \eqref{2.2}, $\tilde{q}\geq\{2q',3\}$ and
\begin{align}\label{4.4}
\int_{L^2_{\rm{div}}(D)}\|v\|^2_{L^2(D)}d\mu_0(v)<\infty,\quad
\int_{L^2(Q)}\|\mathbf{g}\|^2_{L^2(Q)}d\mu_f(\mathbf{g})<\infty,
\end{align}
then there holds uniformly in $N$:
\begin{align*}
&E\left(\sup_{t\in(0,T)}\int_{D}|u^N(t)|^2dx+\int_Q|\nabla u^N|^qdxdt+\varepsilon\int_Q|u^N|^{\tilde{q}}dxdt\right)\\
&\quad\leq
c\left(1+\int_{L^2_{\rm{div}}(D)}\|v\|^2_{L^2(D)}d\mu_0(v)+\int_{L^2(Q)}\|\mathbf{g}\|^2_{L^2(Q)}d\mu_f(\mathbf{g})\right),
\end{align*}
where $c$ is independent of $\varepsilon$.
\end{Lemma}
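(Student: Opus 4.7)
The plan is a standard stochastic energy estimate adapted to the Galerkin system, combined with Korn's inequality to pass from the symmetric gradient to the full gradient. Since $u^N(t)=C^N(t)\cdot w^N$ with $C^N$ solving the finite-dimensional SDE \eqref{4.3}, I would first apply the finite-dimensional It\^o formula to $\tfrac12|C^N(t)|^2=\tfrac12\|u^N(t)\|_{L^2(D)}^2$ (using orthonormality (b) of the basis). Testing \eqref{4.2} by $u^N=\sum_k c_k^N w_k$ and combining with the It\^o correction yields, for every $t\in[0,T]$,
\begin{equation*}
\begin{split}
\tfrac12\|u^N(t)\|_{L^2}^2&+\int_0^t\!\!\int_D S(x,r,\mathbb{D}(u^N)){:}\mathbb{D}(u^N)\,dx\,dr+\varepsilon\int_0^t\!\!\int_D|u^N|^{\tilde q}\,dx\,dr\\
&=\tfrac12\|\mathcal{P}^N u_0\|_{L^2}^2+\int_0^t\!\!\int_D u^N\otimes u^N{:}\nabla u^N\,dx\,dr+\int_0^t\!\!\int_D f\cdot u^N\,dx\,dr\\
&\quad+\tfrac12\sum_{k=1}^N\int_0^t\|\mathcal{P}^N\Phi(u^N)e_k\|_{L^2}^2\,dr+\int_0^t\!\!\int_D u^N\cdot\Phi(u^N)\,dx\,dW(r).
\end{split}
\end{equation*}
The convective term vanishes because $\nabla\cdot u^N=0$ and $u^N|_{\partial D}=0$, so the quadratic form $\int u^N\otimes u^N:\nabla u^N\,dx=0$.

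Next I would apply the structural hypotheses. Coercivity \eqref{1.3} gives $\int S:\mathbb{D}(u^N)\,dx\ge C_0\int|\mathbb{D}(u^N)|^q\,dx-\|\eta_2\|_{L^1(D)}$, and Korn's inequality converts this into $c\int|\nabla u^N|^q\,dx$ on the left-hand side. The linear growth \eqref{2.1} combined with $|\mathcal{P}^N\Phi(u^N)e_k|\le|\Phi(u^N)e_k|$ gives
\begin{equation*}
\sum_{k\ge1}\|\mathcal{P}^N\Phi(u^N)e_k\|_{L^2(D)}^2\le\int_D\Bigl(\sum_{k\ge1}|g_k(u^N)|\Bigr)^2dx\le c\bigl(1+\|u^N\|_{L^2}^2\bigr),
\end{equation*}
and the forcing term is absorbed by $\int f\cdot u^N\,dx\le\tfrac12\|f\|_{L^2}^2+\tfrac12\|u^N\|_{L^2}^2$. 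Taking the supremum in $t$ and then expectation, the only delicate term is the stochastic integral, which I would control by the Burkholder--Davis--Gundy inequality:
\begin{equation*}
\begin{split}
E\sup_{t\in(0,T)}\Bigl|\int_0^t\!\!\int_D u^N\!\cdot\Phi(u^N)\,dx\,dW(r)\Bigr|
&\le cE\biggl(\int_0^T\|u^N\|_{L^2}^2\sum_{k\ge1}\|g_k(u^N)\|_{L^2}^2\,dr\biggr)^{1/2}\\
&\le cE\Bigl(\sup_{t}\|u^N\|_{L^2}\cdot\bigl(\textstyle\int_0^T(1+\|u^N\|_{L^2}^2)dr\bigr)^{1/2}\Bigr)\\
&\le\tfrac14 E\sup_{t}\|u^N\|_{L^2}^2+cE\int_0^T\bigl(1+\|u^N\|_{L^2}^2\bigr)dr.
\end{split}
\end{equation*}

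After absorbing $\tfrac14E\sup_t\|u^N\|_{L^2}^2$ into the left-hand side and using $\|\mathcal{P}^N u_0\|_{L^2}\le\|u_0\|_{L^2}$ together with the distributional bounds \eqref{4.4}, I arrive at an inequality of the form
\begin{equation*}
E\sup_{t}\|u^N(t)\|_{L^2}^2+cE\int_Q|\nabla u^N|^q\,dx\,dt+c\varepsilon E\int_Q|u^N|^{\tilde q}\,dx\,dt\le K+cE\int_0^T\sup_{s\le r}\|u^N(s)\|_{L^2}^2\,dr,
\end{equation*}
with $K$ the $\varepsilon$-independent right-hand side of the claimed bound. A Gronwall argument then closes the estimate. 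The main obstacle is the careful BDG-Young step above: one must invoke \eqref{2.1} in its summed (not squared) form to obtain the bound $\sum|g_k(\xi)|^2\le(\sum|g_k(\xi)|)^2\le c(1+|\xi|^2)$, and then isolate $\sup_t\|u^N\|_{L^2}$ outside the time integral before applying Young's inequality so that the sup-term can be absorbed; everything else is deterministic bookkeeping.
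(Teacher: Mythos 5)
Your proof is correct and follows essentially the same route as the paper: It\^o's formula on $\tfrac12\|u^N\|_{L^2}^2$, coercivity \eqref{1.3} plus Korn, the linear growth bound \eqref{2.1} for the It\^o correction, BDG plus Cauchy--Schwarz plus Young for the martingale term, and Gronwall to close. The only cosmetic difference is that you run the BDG absorption and Gronwall in a single pass, whereas the paper first closes a Gronwall argument without the supremum (using $I_2=0$) and then handles $E\sup_t$ in a second step via BDG.
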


\begin{proof}
Since $du^N=\sum_{k=1}^N dc_k^N\cdot w_k$, $\int_{D}u^N\otimes u^N:
\mathbb{D}u^Ndx=0$,  $\int_Dw_kw_mdx=\delta_{km}, \forall k,m\in
\mathbb{N}$, and \begin{align*}
dc_k^N&=-\int_{D}S(x,t,\mathbb{D}(u^N)):\mathbb{D}(w_k)dxdt-\varepsilon\int_{D}|u^N|^{\tilde{q}-2}u^N\cdot
w_k dxdt\\&\quad+\int_{D}u^N\otimes u^N :\nabla w_kdxdt
+\int_{D}f\cdot w_k dxdt +\int_{D}\Phi(u^N)\cdot w_k dxdW^N(t),
\end{align*}
It\^{o}'s formula $f(X)=\frac{1}{2}|X|^2$ yields
\begin{align}\label{4.5}
\notag
\frac{1}{2}\|u^N(t)\|^2_{L^2(D)}&=\frac{1}{2}\|C^N(0)\|^2_{L^2(D)}+\sum_{k=1}^N\int_0^t\!\!\!\int_D
C_k^N d(C_k^N)(r)dxdr
+\frac{1}{2}\sum_{k=1}^N\int_0^t\!\!\!\int_Dd\langle C_k^N\rangle(r)dxdr\\
\notag&=\frac{1}{2}\|\mathcal{P}^Nu_0\|^2_{L^2(D)}-\int_0^t\!\!\!\int_DS(x,r,\mathbb{D}(u^N)):\mathbb{D}(u^N)dxdr-\varepsilon\int_0^t\!\!\!\int_D
|u^N|^{\tilde{q}}dxdr\\
&\quad+\int_0^t\!\!\!\int_Df\cdot u^Ndxdr+\int_0^t\!\!\!\int_D
u^N\cdot
\Phi(u^N) dxdW^N(r)\\
\notag&\quad+\frac{1}{2}\sum_{i=1}^N\int_0^t\!\!\!\int_D|\Phi(u^N)e_i|^2dxdr.
\end{align}
From \eqref{1.3}, \eqref{4.5} and Korn's inequality, it follows that
\begin{align*}
&E\left(\int_{D}|u^N(t)|^2dx+\int_0^t\!\!\!\int_D|\nabla u^N|^qdxdr+\varepsilon\int_0^t\!\!\!\int_D|u^N|^{\tilde{q}}dxdr\right)\\
&\leq c\left[1+I_1+I_2+I_3+E\left(\|v_0\|^2_{L^2(D)}\right)\right],
\end{align*}
where
\begin{align*}
I_1&=E\left(\int_0^t\!\!\!\int_Df\cdot u^Ndxdr\right),\\
I_2&=E\left(\int_0^t\!\!\!\int_Du^N\cdot \Phi(u^N)dxdW^N(r)\right),\\
I_3&=E\left(\sum_{i=1}^N\int_0^t\!\!\!\int_D|\Phi(u^N)e_i|^2dxdr
\right).
\end{align*}
By using Young's inequality, we have
$$I_1\leq\delta
E\left(\int_0^t\!\!\!\int_{D}|u^N(t)|^2dxdr\right)+c(\delta)E\left(\int_0^t\!\!\!\int_D|f|^2dxdr\right)~
\mbox{ for }~\forall\delta>0.$$ It is clear that $I_2=0$. Thanks to
\eqref{2.1}, we deduce that
\begin{equation*}
\begin{split}
I_3&\leq E\left(\sum_{i=1}^N\int_0^t\!\!\!\int_D|g_i(u^N)|^2dxdr\right)\\
&\leq E\left(1+\int_0^t\!\!\!\int_D|u^N|^2dxdr\right).
\end{split}
\end{equation*}
Then, by interchanging the time-integral and the expectation value
and using Gronwall's inequality, we obtain
\begin{equation}\label{4.6}
\begin{split}
&E\left(\sup_{t\in(0,T)}\int_{D}|u^N(t)|^2dx\right)+E\left(\int_{Q}|\nabla u^N|^qdxdt\right)\\
&\quad\leq cE\left(1+\int_{D}|u_0|^2dx+\int_Q|f|^2dxdt\right).
\end{split}
\end{equation}

Similarly, we have
\begin{equation}\label{4.7}
\begin{split}
E\left(\sup_{t\in(0,T)}\int_{D}|u^N(t)|^2dx\right)&\leq
cE\left(1+\int_{D}|u_0|^2dx+\int_Q|f|^2dxdt
+\int_0^T\!\!\!\int_{D}|u^N|^2dxdt\right)\\
&\quad+E\left(\sup_{t\in(0,T)}\left|\int_0^T\!\!\!\int_Du^N\cdot
\Phi(u^N)dxdW^N(t)\right|\right).
\end{split}
\end{equation}
Using Burkholder-Davis-Gundy inequality, H\"{o}lder's inequality,
Young's inequality and \eqref{2.1}, one has
\begin{align*}
&E\left(\sup_{t\in(0,T)}\left|\int_0^t\!\!\!\int_Du^N\cdot \Phi(u^N)dxdW^N(r)\right|\right)\\
&=E\left(\sup_{t\in(0,T)}\left|\sum_{i}\int_0^t\!\!\!\int_Du^N\cdot \Phi(u^N)e_idxd\beta_i(r)\right|\right)\\
&=E\left(\sup_{t\in(0,T)}\left|\sum_{i}\int_0^t\!\!\!\int_Du^N\cdot g_i(u^N)dxd\beta_i(r)\right|\right)\\
&\leq cE\left[\int_0^T\sum_{i}\left(\int_Du^N\cdot g_i(u^N)dx\right)^2dt\right]^{\frac{1}{2}}\\
&\leq cE\left[\int_0^T\left(\sum_{i}\int_D|u^N|^2dx \cdot\int_D|g_i(u^N)|^2dx\right)dt\right]^{\frac{1}{2}}\\
&\leq \delta E\left(\sup_{t\in(0,T)}\int_D|u^N|^2dx\right)+c(\delta)E\left(1+\int_0^T\int_D|u^N|^2dxdt\right).\\
\end{align*}
For $\delta$ sufficiently small, this together with \eqref{4.6}
yield Lemma \ref{lemma4.1}.
\end{proof}

\begin{Lemma}
Assume that \eqref{1.2}-\eqref{1.4} with $q\in (1,\infty)$,
\eqref{2.1}, \eqref{2.2}, $\tilde{q}\geq\{2q',3\}$ and \eqref{4.4}
hold. Then

 $(1)$ There exists a martingale weak solution
$((\overline{\Omega},\overline{\mathscr{F}},\overline{\mathscr{F}}_t,\overline{\mathbb{P}}),\overline{u},\overline{u}_0,\overline{f},\overline{W})$
to \eqref{4.1} in the sense that:

 $(\rm a)$
$(\overline{\Omega},\overline{\mathscr{F}},\overline{\mathscr{F}}_t,\overline{\mathbb{P}})$
is a stochastic basis with a complete right-continuous filtration;

$(\rm b)$ $\overline{W}$ is an
$\overline{\mathscr{F}}_t$-cylindrical Wiener process;

$(\rm c)$ $\overline{u}\in \overline{V}_{q,\tilde{q}}$ is
progressively measurable, where
$$\overline{V}_{q,\tilde{q}}=L^2(\overline{\Omega};L^\infty(0,T;L^2(D)))\cap L^{\tilde{q}}
(\overline{\Omega}\times Q; \overline{\mathbb{P}}\otimes
\mathcal{L}^{n+1})\cap L^q
(\overline{\Omega};L^q(0,T;W_{0,\rm{div}}^{1,q}(D)));$$

$(\rm d)$ $\overline{u}_0\in L^2(\overline{\Omega}; L^{2}(D))$ with
$\mu_0=\overline{\mathbb{P}}\circ \overline{u}_0^{-1}$;

$(\rm e)$ $\overline{f}\in L^2(\overline{\Omega}; L^{2}(Q))$ is
adapted to $\overline{\mathscr{F}}_t$ and
$\mu_f=\overline{\mathbb{P}}\circ \overline{f}^{-1}$;

$(\rm f )$ $\forall \varphi\in C_{0,\rm{div}}^{\infty}(D)$ and
$\forall t\in [0,T]$, it holds that $\overline{\mathbb{P}}$-a.s.
\begin{equation*}
\begin{split}
&\int_{D}\!(\overline{u}(t)\!-\!\overline{u}_0)\cdot\varphi
dx\!+\!\varepsilon\!\!\int_0^t\!\!\!\int_{D}|\overline{u}|^{\tilde{q}-2}\overline{u}\cdot\varphi
dxdr\!-\!\!\int_0^t\!\!\!\int_{D}\overline{u}\otimes \overline{u}
:\mathbb{D}(\varphi)\!+\!S(x,r,\mathbb{D}(\overline{u})):\mathbb{D}(\varphi)
dxdr\\
&=\int_0^t\!\!\!\int_{D}\overline{f}\cdot \varphi dx dr
+\int_0^t\!\!\!\int_{D}\Phi(\overline{u})\cdot \varphi
dxd\overline{W}(r),
\end{split}
\end{equation*}

$(2)$ There holds
\begin{equation*}
\begin{split}
\overline{E}&\left(\sup_{t\in(0,T)}\int_{D}|\overline{u}(t)|^2dx+\int_Q|\nabla \overline{u}|^qdxdt+\varepsilon\int_Q|\overline{u}|^{\tilde{q}}dxdt\right)\\
&\leq
c\left(1+\int_{L^2_{\rm{div}}(D)}\|v\|^2_{L^2(D)}d\mu_0(v)+\int_{L^2(Q)}\|\mathbf{g}\|^2_{L^2(Q)}d\mu_f(\mathbf{g})\right),
\end{split}
\end{equation*}
where $c$ is independent of $\varepsilon$.
\end{Lemma}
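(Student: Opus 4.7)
The plan is to pass to the limit $N\to\infty$ in the Galerkin system for $u^N$ already constructed above via the classical SDE existence theory. First I would use It\^o's formula applied to $\tfrac{1}{2}\|u^N\|_{L^2}^2$ together with the Burkholder-Davis-Gundy inequality to upgrade Lemma \ref{lemma4.1} to uniform moment bounds of sufficiently high order, which give uniform integrability for the nonlinearities $u^N\otimes u^N$ and $|u^N|^{\tilde{q}-2}u^N$. Testing the Galerkin equation against basis functions yields a uniform bound of $u^N$ in $C^\alpha([0,T];W^{-k,2}(D))$ for suitable $k>1+n/2$ and $\alpha\in(0,1/2)$, by splitting the drift and It\^o parts in the usual way. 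Combined with the uniform $L^q(0,T;W^{1,q}_{0,\text{div}}(D))\cap L^{\tilde{q}}(Q)$ control and the compact embedding $W^{1,q}\hookrightarrow L^2\hookrightarrow W^{-k,2}$, this yields tightness of the laws of $(u^N,u_0,f,W^N)$ on the path space obtained by intersecting $L^q(0,T;W^{1,q}_{0,\text{div}}(D))$ and $L^{\tilde{q}}(Q)$ (endowed with their weak topologies) with $C([0,T],W^{-k,2}(D))$, together with $L^2(D)\times L^2(Q)\times C([0,T],U_0)$.

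Second, I would invoke the Jakubowski-Skorokhod theorem, valid on this quasi-Polish setting, to obtain a new stochastic basis $(\overline{\Omega},\overline{\mathscr{F}},\overline{\mathbb{P}})$ carrying random variables $(\overline{u}^N,\overline{u}_0^N,\overline{f}^N,\overline{W}^N)$ having the same joint laws as the originals and converging $\overline{\mathbb{P}}$-a.s.\ to a limit $(\overline{u},\overline{u}_0,\overline{f},\overline{W})$. Building the filtration $\overline{\mathscr{F}}_t$ in the canonical way (generated by the limit variables, augmented and right-continuous), a standard quadratic-variation argument shows that $\overline{W}$ is an $\overline{\mathscr{F}}_t$-cylindrical Wiener process, while the laws $\mu_0,\mu_f$ are preserved by equality in law. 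The Galerkin identity transfers directly to $\overline{u}^N$ on the new basis.

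The main obstacle is identifying the limits of the nonlinear terms on the new basis. From the a.s.\ convergence of $\overline{u}^N$ in $C([0,T];W^{-k,2}(D))$ together with the uniform $L^2$-bound, interpolation gives strong convergence of $\overline{u}^N$ to $\overline{u}$ in $L^2(Q)$; combined with uniform $L^{\tilde{q}}(Q)$ integrability, Vitali yields strong convergence in $L^r(Q)$ for every $r<\tilde{q}$. This suffices to pass to the limit in $\overline{u}^N\otimes\overline{u}^N$ (since $\tilde{q}\geq 3$) and in $\varepsilon|\overline{u}^N|^{\tilde{q}-2}\overline{u}^N$. For the stress term $S(\cdot,\mathbb{D}(\overline{u}^N))$ only weak convergence in $L^{q'}(Q)$ is available, so I would apply Minty's monotonicity trick based on \eqref{1.4}: the condition $\tilde{q}\geq 2q'$ combined with the $\varepsilon|\cdot|^{\tilde{q}-2}$ term makes $\overline{u}$ itself an admissible test function, yielding the energy identity needed to identify the weak limit with $S(\cdot,\mathbb{D}(\overline{u}))$. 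Convergence of the stochastic integrals $\int_0^t\Phi(\overline{u}^N)\,d\overline{W}^N$ to $\int_0^t\Phi(\overline{u})\,d\overline{W}$ follows from the linear growth and Lipschitz control \eqref{2.1}-\eqref{2.2} of $\Phi$, via the standard Debussche-Glatt-Holtz-Temam argument.

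Finally, item $(2)$ follows by lower semicontinuity. Since $\overline{u}^N$ shares the law of $u^N$, the estimate of Lemma \ref{lemma4.1} transfers verbatim to $\overline{u}^N$; using Fatou's lemma together with the weak lower semicontinuity of the norms in $L^\infty(0,T;L^2(D))$, $L^q(0,T;W^{1,q}_{0,\text{div}}(D))$ and $L^{\tilde{q}}(Q)$, the same bound is inherited by the limit $\overline{u}$ with a constant independent of $\varepsilon$.
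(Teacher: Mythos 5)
Your overall strategy matches the paper's: uniform estimates via It\^o's formula and BDG, a H\"older bound in time on a negative Sobolev space for the stochastic part via the Kolmogorov criterion, tightness of the joint laws, a Skorokhod-type representation to a new probability basis, identification of $\overline{u}\otimes\overline{u}$ and $\varepsilon|\overline{u}|^{\tilde{q}-2}\overline{u}$ through strong convergence plus Vitali, a monotonicity (Minty) argument for the stress $S$ based on comparing the two It\^o energy identities, and lower semicontinuity for item (2). Two minor stylistic differences: the paper works with the strong $L^\gamma(0,T;L^\gamma(D))$ topology (so plain Prokhorov and the classical Skorokhod representation from Ikeda--Watanabe suffice at this stage) rather than weak topologies and Jakubowski--Skorokhod; and the paper proves the higher moment bounds explicitly (Corollary 4.1) which you correctly flag as needed for the Kolmogorov criterion.

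There is, however, one genuine gap. You write that ``the Galerkin identity transfers directly to $\overline{u}^N$ on the new basis.'' It does not transfer directly, and the paper is at pains to say so: the quantity
$\xi^N(t)$
containing the stochastic integral $\int_0^t\int_D\Phi(u^N)\cdot\mathcal{P}^N_\ell\varphi\,dx\,dW^N$ is \emph{not} a deterministic (continuous) functional of the path $(u^N,W^N)$, so equality of laws of $(u^N,u^N_0,f^N,W^N)$ and $(\overline{u}^N,\overline{u}^N_0,\overline{f}^N,\overline{W}^N)$ does not immediately imply that $\overline{\xi}^N\equiv 0$. The paper devotes Proposition \ref{prop4.2} to precisely this point, invoking the uniqueness-in-law result for the It\^o integral of Brze\'zniak--Hausenblas (Theorem 2.4 and Corollary 2.5 in \cite{BH}) to show that $\mathscr{L}(\overline{u}^N,\overline{u}^N_0,\overline{f}^N,\overline{W}^N,\overline{\xi}^N)=\mathscr{L}(u^N,u^N_0,f^N,W^N,\xi^N)$, and then a cut-off function $\phi_\varepsilon$ to pass from equality of laws of the nonnegative random variables $Y^N,Z^N$ to $Y^N=0$ $\overline{\mathbb{P}}$-a.s. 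The introduction explicitly highlights this step (the cut-off argument, in place of Ondrej\'at's elementary method used by Breit) as a main technical contribution. Alternative routes exist -- martingale representation, or Ondrej\'at's approach from \cite{MO} -- but \emph{some} argument is required, and you should either supply one or at least flag the issue; as written your claim silently assumes a continuity property of the stochastic integral map that fails.
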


\begin{proof}
Let $\mathcal{S}(u)=\varepsilon|u|^{\tilde{q}-2}u$, from Lemma 4.1,
we know that there exist functions $u\in V_{q,\tilde{q}}$ and
functions $\tilde{\mathcal{S}}$ and $\tilde{S}$, such that
\begin{align}\label{4.8}
u^N\rightharpoonup u &~\mbox{ in }~ L^q
(\Omega;L^q(0,T;W_{0,\rm{div}}^{1,q}(D))),\\
\label{4.9} u^N\rightharpoonup u& ~\mbox{ in }~
L^{\tilde{q}}(\Omega;L^{\tilde{q}}(Q)) \\
\label{4.10} \mathcal{S}(u^N)\rightharpoonup \tilde{\mathcal{S}}&
~\mbox{ in }~
L^{\tilde{q}'}(\Omega;L^{\tilde{q}'}(Q))\\
\label{4.11} S(x,t,\mathbb{D}(u^N))\rightharpoonup \tilde{S}&
~\mbox{ in }~
L^{q'}(\Omega; L^{q'}(Q)) \\
\label{4.12} S(x,t,\mathbb{D}(u^N))\rightharpoonup \tilde{S}&
~\mbox{ in }~ L^{q'} (\Omega;L^{q'}(0,T;W_{0,\rm{div}}^{-1,q'}(D)))
\\
\label{4.13} u^N\otimes u^N\rightharpoonup \tilde{U}& ~\mbox{ in }~
{L^{\tilde{q}/2}(\Omega;L^{\tilde{q}/2}(Q))} \\
\label{4.14} \Phi(u^N)\rightharpoonup \tilde{\Phi}& ~\mbox{ in }~
L^2 (\Omega;L^2(0,T;L_2({U},L^2(D))))
\end{align}

In order to prove
\begin{equation}\label{4.15}
\begin{split}
\tilde{U}=u\otimes u,\quad \tilde{\Phi}=\Phi(u),
\end{split}
\end{equation}
we will use some compactness arguments similar to the ideas from
\cite[Sec.4]{MH}. Let $\mathcal{P}^N_{\ell}$
denotes the projection from $W_{0,\rm{div}}^{\ell,2}(D)$
into $\mathcal{X}_N$. By using \eqref{4.2}, we have
\begin{align*}
&\int_{D}u^N\cdot\varphi dx=\int_{D}u^N(t)\cdot\mathcal{P}^N_{\ell}\varphi dx\\
&=\int_{D}u_0\cdot\mathcal{P}^N_{\ell}\varphi
dx-\int_0^t\!\!\!\int_{D}(H^N_1+H^N_2):\nabla\mathcal{P}^N_{\ell}\varphi
dxdr+\int_0^t\!\!\!\int_{D}\Phi(u^N)\cdot\mathcal{P}^N_{\ell}\varphi
dx dW^N(r),
\end{align*}
where
\begin{align*}
H_1^N&:=S(x,t,\mathbb{D}(u^N)),\\
H_2^N&:=\nabla\Delta^{-1}f-\nabla\Delta^{-1}\mathcal{S}(u^N)-u^N\otimes
u^N.
\end{align*}

From Lemma 4.1, \eqref{1.2}-\eqref{1.4} and the fact
$\mathcal{S}=\varepsilon|u|^{\tilde{q}-2}u$, it follows that
\begin{equation}\label{4.16}
\begin{split}
H_1^N+H_2^N\in L^{q_0}(\Omega\times Q;\mathbb{P}\otimes
\mathcal{L}^{n+1}), \;
q_0:=\min\left\{q',\frac{\tilde{q}}{2},\tilde{q}'\right\}>1,
\end{split}
\end{equation}
uniformly in $N$. Let
$$\mathcal{H}(t,\varphi)=\int_0^t\!\!\!\int_D(H_1^N+H_2^N):\nabla\mathcal{P}_{\ell}^N\varphi dxdr,\; \varphi\in C^\infty_{0,\text{div}}(D).$$
By the fact $W^{\tilde{\ell},q^\prime_0}(D)\hookrightarrow
W_0^{\ell,2}(D)$ for $\tilde{\ell}\geq
\ell+n(1+\frac{2}{q^\prime_0})$ and \eqref{4.16}, we have
$$E\left(\|\mathcal{H}\|_{W^{1,q_0}(0,T;W_{\rm{div}}^{-\tilde{\ell},q_0}(D))}\right)\leq c.$$
For the stochastic term, using \eqref{2.1}, \eqref{2.2} and Lemma
4.1, for any $\vartheta>2$, one has
\begin{equation*}
\begin{split}
E\left(\left\|\int_s^t\Phi(u^N)dW^N(r)\right\|_{L^2(D)}\right)^\vartheta
\leq c(t-s)^{\frac{\vartheta}{2}}.
\end{split}
\end{equation*}
Thanks to the Kolmogorov continuity criterion \cite{GDJZ}, we can
infer that for any $\Lambda\in[0, 1/2)$,
\begin{equation*}
\begin{split}
E\left(\left\|\int_0^t\Phi(u^N)dW^N(r)\right\|_{C^\Lambda([0,T];L^2(D))}\right)
\leq c,
\end{split}
\end{equation*}
 Then
\begin{equation}\label{4.17}
\begin{split}
E\left(\left\|u^N\right\|_{C^\Lambda([0,T];W_{\rm{div}}^{-\tilde{\ell},q_0}(D))}\right)\leq
c,
\end{split}
\end{equation}
and
\begin{equation}\label{4.18}
\begin{split}
E\left(\left\|u^N\right\|_{W^{\lambda,q_0}(0,T;W_{\rm{div}}^{-\tilde{\ell},q_0}(D))}\right)\leq
c,
\end{split}
\end{equation}
for some $\lambda>0$. Note that an interpolation with
$L^{q_0}(0,T;W_{0,\rm{div}}^{1,q_0}(D))$
yields for some $\kappa>0$
\begin{equation}\label{4.19}
\begin{split}
E\left(\left\|u^N\right\|_{W^{\kappa,q_0}(0,T;L_{\rm{div}}^{q_0}(D))}\right)\leq
c.
\end{split}
\end{equation}

Now, we prepare the setup for our compactness method. Define the
path space of $(u^N,W,u_0,f)$ by
$$\mathcal{V}:=L^\gamma(0,T;L^\gamma(D))\times C([0,T],{U}_0)\times L^2_{\rm{div}}(D)\times L^2(Q).$$
Let us denote by $\mu_{u^N}$ the law of $u^N$ on
$L^\gamma(0,T;L^\gamma(D))$. By $\mu_{W}$, we denote the law of $W$
on $C([0,T],{U}_0)$. 
The joint law of $u^N$, $W$, $u_0$ and $f$ on $\mathcal{V}$ is
denoted by $\mu^N$.
\begin{Proposition}
The set $\{\mu^N|N\in\mathbb{N}\}$ is tight on $\mathcal{V}$.
\end{Proposition}
\begin{proof}
In order to prove the tightness of $\mu^N$, we need  the following
three steps.

{Step 1: Tightness of $\mu_{u^N}$.} 
On account of $L^{\tilde{q}}\hookrightarrow\hookrightarrow L^{q_0}$,
if $-\frac{n}{\gamma}<-\frac{n}{\tilde{q}}$, we can use Theorem 5.2
\cite{HA} to obtain
$$W^{\kappa,q_0}(0,T;L_{\rm{div}}^{q_0}(D))\cap V_{q,\tilde{q}}\hookrightarrow\hookrightarrow L^\gamma(0,T;L^\gamma_{\rm{div}}(D))$$
compactly for all $q_0<\gamma<\tilde{q}$. We consider the ball $B_R$
in the space $W^{\kappa,q_0}(0,T;L^{q_0}_{\rm{div}}(D))\cap
V_{q,\tilde{q}}$ and let $B_R^c$ be the complement of the ball.
Using Lemma 4.1 and \eqref{4.19}, we have
\begin{align*}
\mu_{u^N}(B_R^c)&=\mathbb{P}(\|u^N\|_{W^{\kappa,q_0}(0,T;L_{\rm{div}}^{q_0}(D))}+\|u^N\|_{V_{q,\tilde{q}}}\geq R)\\
&\leq
\frac{1}{R}E\left(\|u^N\|_{W^{\kappa,q_0}(0,T;L_{\rm{div}}^{q_0}(D))}+\|u^N\|_{V_{q,\tilde{q}}}\right)\leq\frac{c}{R}.
\end{align*}
Then, there exists $R(\eta)$ such that
$$\mu_{u^N}(B_{R(\eta)})\geq 1-\frac{\eta}{4},$$
for a fixed $\eta>0$. These yield the tightness of $\mu_{u^N}$.

{Step 2: Tightness of $\mu_W$.} We consider the ball $B_R$ in the
space $C([0,T];U_0)$ and let $B_R^c$ be the complement of the ball.
Then
\begin{align*}
\mu_W(B_R^c)&=\mathbb{P}(\|W\|_{C([0,T];U_0)}\geq R)\leq
\frac{1}{R}E\left(\|W\|_{C([0,T];U_0)}\right)\leq\frac{1}{R}E\left(\sup_{[0,T]}\|W(r)\|_{U_0}\right)\leq\frac{c}{R} .
\end{align*}
Then, there exists $R(\eta)$ such that
$$\mu_{W}(B_{R(\eta)})\geq 1-\frac{\eta}{4},$$
for a fixed $\eta>0$. These imply the tightness of $\mu_{W}$.

{Step 3: Tightness of $\mu_0,\mu_f$.} We consider the ball $B_R$ in
the space $L^2_{\text{div}}(D)$ and let $B_R^c$ be the complement of
the ball. Therefore
\begin{align*}
\mu_0(B_R^c)&=\mathbb{P}(\|u_0\|_{L^2_{\text{div}}(D)}\geq R)
\leq
\frac{1}{R}E\left(\|u_0\|_{L^2_{\text{div}}(D)}\right)\leq\frac{c}{R}.
\end{align*}
Then, there exists $R(\eta)$ such that
$$\mu_{0}(B_{R(\eta)})\geq 1-\frac{\eta}{4},$$
for a fixed $\eta>0$. These yield the tightness of $\mu_{0}$.

We consider the ball $B_R$
in the space $L^2(Q)$ and let $B_R^c$ be the complement of the ball.
Then we have
\begin{align*}
\mu_f(B_R^c)&=\mathbb{P}(\|u_f\|_{L^2(Q)}\geq R)
\leq
\frac{1}{R}E\left(\|u_f\|_{L^2(Q)}\right)\leq\frac{c}{R}.
\end{align*}
Then, there exists $R(\eta)$ such that
$$\mu_{f}(B_{R(\eta)})\geq 1-\frac{\eta}{4},$$
for a fixed $\eta>0$. These imply the tightness of $\mu_{f}$.

So we can find a compact subset $\mathcal{V}_\eta\subset\mathcal{V}$
such that $\mu^N(\mathcal{V}_\eta)\geq 1-\eta$. Thus,
$\{\mu^N|N\in\mathbb{N}\}$ is tight in the same space.

\end{proof}

Thanks to Prokhorov's Theorem in \cite{NISW}, we can infer that
$\mu^N$ is also relatively weakly compact. Then 
$\mu_n\to\mu$ weakly. By the Skorohod representation theorem in
\cite{NISW}, we know that the following result.
\begin{Proposition}
There exists a probability space
$(\overline{\Omega},\overline{\mathscr{F}},\overline{\mathbb{P}})$
with $\mathcal{V}$-valued Borel measurable random variables
$(\overline{u}^N,\overline{u}^N_0,\overline{f}^N,\overline{W}^N)$
and $(\overline{u},\overline{u}_0,\overline{f},\overline{W})$ such
that the following hold:

$\clubsuit$ The laws of
$(\overline{u}^N,\overline{u}^N_0,\overline{f}^N,\overline{W}^N)$
and $(\overline{u},\overline{u}_0,\overline{f},\overline{W})$ under
$\overline{\mathbb{P}}$ coincide with $\mu^N$ and $\mu$.

$\clubsuit$
\begin{align*}
\overline{u}^N\rightharpoonup \overline{u}&  ~\mbox{ \rm  in }~
L^{\gamma}(0,T;L^\gamma(D))\ \ \overline{\mathbb{P}}-\text{a.s.},\\
\overline{W}^N\rightharpoonup  \overline{W}& ~
 \mbox{ \rm  in }~   C([0,T],{U}_0)\ \ \overline{\mathbb{P}}-\text{a.s.},\\
\overline{u}_0^N\rightharpoonup \overline{u}_0&  ~\mbox{  \rm in }~
L^{2}(D)\ \ \overline{\mathbb{P}}-\text{a.s.},\\
 \overline{f}^N\rightharpoonup \overline{f}&  ~\mbox{ \rm  in }~
L^{2}(0,T;L^2(D))\ \ \overline{\mathbb{P}}-\text{a.s.}.
\end{align*}

$\clubsuit$ The convergence in \eqref{4.8} and \eqref{4.13} still
hold for the corresponding functions defined on
$(\overline{\Omega},\overline{\mathscr{F}},\overline{\mathbb{P}})$.
Moreover, we have
$$\int_{\overline{\Omega}}\left(\sup_{[0,T]}\|\overline{W}^N(t)\|^{\alpha}_{{U}_0}\right)
d\overline{\mathbb{P}}=\int_{\Omega}\left(\sup_{[0,T]}\|W(t)\|^{\alpha}_{{U}_0}\right)d\mathbb{P}\quad
\text{for all}\ \ \alpha<\infty.$$
\end{Proposition}

 By Vitali's convergence Theorem,
for all $\gamma<\tilde{q}$, we have
\begin{align}\label{4.20}
\overline{W}^N\rightarrow \overline{W}& ~\mbox{ in }~
L^2(\overline{\Omega};C([0,T],{U}_0)),\\
\label{4.21} \overline{u}^N\rightarrow \overline{u}& ~\mbox{ in }~
L^\gamma(\overline{\Omega}\times
Q;\overline{\mathbb{P}}\times\mathcal{L}^{n+1}), \\
\label{4.22} \overline{u}_0^N\rightarrow \overline{u}_0& ~\mbox{ in
}~ L^2(\overline{\Omega}\times
D;\overline{\mathbb{P}}\times\mathcal{L}^{n+1}), \\
\label{4.23} \overline{f}^N\rightarrow \overline{f}& ~\mbox{ in }~
L^2(\overline{\Omega}\times
Q;\overline{\mathbb{P}}\times\mathcal{L}^{n+1}),
\end{align}
after choosing a subsequence.

Now, we are going to show that the approximated equations also hold
on the new probability space. To this end, we define
\begin{align*}
\xi^N(t)&\!=\!\int_{D}(u^N(t)-u_0)\cdot\varphi dx-
\int_0^t\!\!\!\int_{D}u^N\otimes u^N :\nabla\mathcal{P}_N\varphi dxdr+{\int_0^t\!\!\!
\int_{D}\mathcal{S}(u^N)\cdot\mathcal{P}_N\varphi dxdr}\\
&\quad+\!\int_0^t\!\!\!\int_{D}S(x,r,\mathbb{D}(u^N)):\mathbb{D}(\mathcal{P}_N\varphi)\!
-\!f\!\cdot\! \mathcal{P}_N\varphi dx
dr\!-\!\int_0^t\!\!\!\int_{D}\Phi(u^N)\cdot\mathcal{P}^N_{\ell}\varphi
dx dW^N(r) ,
\end{align*}
\begin{align*}
Z^N\!=\!\int_0^T
\|\xi^N(t)\|^2_{W_{\rm{div}}^{-\tilde{\ell},q_0}(D))}dt.
\end{align*}
Of course
\begin{align*}
Z^N=0,\; \mathbb{P}-\text{a.s.}.
\end{align*}
Let \begin{align*}
\overline{\xi}^N(t)&\!=\!\int_{D}(\overline{u}^N(t)-\overline{u}^N_0)\cdot\varphi
dx- \int_0^t\!\!\!\int_{D}\overline{u}^N\otimes \overline{u}^N
:\nabla\mathcal{P}_N\varphi
dxdr+{\int_0^t\!\!\!\int_{D}
\mathcal{S}(\overline{u}^N)\cdot\mathcal{P}_N\varphi dxdr}\\
&\quad+\!\int_0^t\!\!\!\int_{D}S(x,r,\mathbb{D}(\overline{u}^N)):\mathbb{D}(\mathcal{P}_N\varphi)
\!-\!\overline{f}^N\!\!\cdot\! \mathcal{P}_N\varphi dx
dr\!-\!\int_0^t\!\!\!\int_{D}\Phi(\overline{u}^N)\cdot\mathcal{P}^N_{\ell}\varphi
dx d\overline{W}^N(r) ,
\end{align*}
\begin{align*}
Y^N\!=\!\int_0^T
\|\overline{\xi}^N(t)\|^2_{W_{\rm{div}}^{-\tilde{\ell},q_0}(D))}dt.
\end{align*}
We want to verify that
\begin{align*}
\overline{E}Y^{N}=0.
\end{align*}
To this end, we have the following Proposition:
\begin{Proposition}\label{prop4.2}
$Y^{N}=0,\;\overline{\mathbb{P}}-\mbox{\rm a.s.}$, that is,
$(\overline{u}^N,\overline{u}^N_0,\overline{f}^N,\overline{W}^N)$
satisfies the equation \eqref{4.1}.
\end{Proposition}
\begin{proof}The difficulty comes from $Z_n$ is not expressed as a
deterministic function of  $(u^N\!,W^N)$ because of the presence of
the stochastic integral. By Theorem 2.4 and Corollary 2.5 in
\cite{BH}, we can infer that
\begin{equation}\label{distrbution1}
\mathscr{L}(\overline{u}^N,\overline{u}^N_0,\overline{f}^N,\overline{W}^N,\overline{\xi}^N
)=\mathscr{L}(u^N,u^N_0,f^N,W^N,\xi^N).
\end{equation}
Here $\mathscr{L}(f)$ is the probability distribution of $f$. Note
that $Y^N$ is continuous as a function of $\overline{\xi}^N$. In
view of \eqref{distrbution1} and the continuity of $Y^N$, one
deduces that the distribution of $Y^N$ is equal to the distribution
of $Z^N$ on $\mathbb{R}_+$, that is,
\begin{equation}\label{distrbution2}
\overline{E}\phi(Y^N)=E\phi(Z^N),
\end{equation}
for any $\phi\in C_b(\mathbb{R}_+)$, where $C_b(X)$ is the space of
continuous bounded functions defined on $X$. Now, let
$\varepsilon>0$ be an arbitrary number and $\phi_\varepsilon\in
C_b(\mathbb{R}_+)$ defined by
\begin{equation*}
\phi_\varepsilon=
\begin{cases}
{y\over\varepsilon}, &0\le y<\varepsilon;\\
1, &y\ge \varepsilon.
\end{cases}
\end{equation*}
One can check that
\begin{equation*}
\begin{split}
\overline{\mathbb{P}}(Y^N\ge \varepsilon)&=\int_{\tilde{\Omega}}
1_{[\varepsilon,\infty]}Y^Nd\overline{\mathbb{P}}\le
\int_{\tilde{\Omega}}
1_{[0,\varepsilon]}{Y^N\over\varepsilon}d\overline{\mathbb{P}}+\int_{\tilde{\Omega}}
1_{[\varepsilon,\infty]}Y^Nd\overline{\mathbb{P}},
\end{split}
\end{equation*}
Hence by the definition of $\overline{E}\phi_\varepsilon(Y^N)$, we
can infer that
\begin{equation*}
\overline{\mathbb{P}}(Y^N\ge \varepsilon)\le
\overline{E}\phi_\varepsilon(Y^N),
\end{equation*}
which together with \eqref{distrbution2} imply that
\begin{equation*}
\overline{\mathbb{P}}(Y^N\ge \varepsilon)\le E\phi_\varepsilon(Z^N),
\end{equation*}
By the fact that $(u^N,u^N_0,f^N,W^N)$ satisfies the Galerkin
equation, from the above inequality, it holds that
\begin{equation}\label{distrbution3}
\overline{\mathbb{P}}(Y^N\ge \varepsilon)\le
E\phi_\varepsilon(Z^N)=0,
\end{equation}
for any $\varepsilon>0$. Since $\varepsilon>0$ is arbitrary, from
\eqref{distrbution3}, we can infer that
\begin{equation}\label{distrbution4}
Y^N=0,\; \overline{\mathbb{P}}-\mbox{\rm a.s.}.
\end{equation}
It follows from \eqref{distrbution4} that
$(\overline{u}^N,\overline{u}^N_0,\overline{f}^N,\overline{W}^N)$
satisfies the equation \eqref{4.1}.
\end{proof}


Since $\overline{W}^N$ has the same law as $W$, there exists a
collection of mutually independent real-valued
$\overline{\mathscr{F}}_t$-Wiener process
$\{\overline{\beta}_k^N\}_{k}$ such that $\overline{W}^N=\sum_k
\overline{\beta}_k^Ne_k$, i.e., there exists a collection of
mutually independent real-valued $\overline{\mathscr{F}}_t$-Wiener
process $\{\overline{\beta}_k\}_{k\geq1}$ such that
$\overline{W}=\sum_k \overline{\beta}_ke_k$. We denote
$\overline{W}^{N,N}\!\!:=\sum_{k=1}^N e_k\overline{\beta}_k^N.$
Proposition \ref{prop4.2} means the equations
\begin{equation}\label{4.28}
\begin{split}
&\int_{D}d\overline{u}^N\cdot
w_kdx+\int_{D}S(x,t,\mathbb{D}(\overline{u}^N)):\mathbb{D}(w_k)dxdt+\varepsilon\int_{D}|\overline{u}^N|^{\tilde{q}-2}\overline{u}^N\cdot
w_k dxdt\\&\quad=\int_{D}\overline{u}^N\otimes \overline{u}^N
:\nabla(w_k)dxdt
+\int_{D}\overline{f}\cdot w_k  dxdt+\int_{D}\Phi(\overline{u}^N)\cdot w_k dxd\overline{W}^{N,N}(t),\\
&\overline{u}^N(0)=\mathcal{P}^N\overline{u}_0,
\end{split}
\end{equation}
$(k=1,2,\cdots N)$ holds on the new probability space
$(\overline{\Omega},\overline{\mathscr{F}},\overline{\mathbb{P}})$.
At the same time, we have
\begin{align}\label{4.29}
\overline{u}^N\rightharpoonup \overline{u}& ~\mbox{ in }~   L^q
(\overline{\Omega};L^q(0,T;W_{0,\rm{div}}^{1,q}(D))),\\
\label{4.30}  \overline{u}^N\rightharpoonup \overline{u}&  ~\mbox{
in }~   L^{\tilde{q}}(\overline{\Omega};L^{\tilde{q}}(Q)),
\\
\label{4.31} \mathcal{S}(\overline{u}^N)\rightharpoonup
\mathcal{S}(\overline{u})& ~\mbox{ in }~
L^{\tilde{q}'}(\overline{\Omega};L^{\tilde{q}'}(Q)),
\\
\label{4.32} S(x,t,\mathbb{D}(\overline{u}^N)) \rightharpoonup
\tilde{\overline{S}}&  ~\mbox{ in }~
L^{q'}(\overline{\Omega};L^{q'}(Q)),
\\
\label{4.33} S(x,t,\mathbb{D}(\overline{u}^N)) \rightharpoonup
\tilde{\overline{S}}&  ~\mbox{ in }~
L^{q'}(\overline{\Omega};L^{q'}(0,T;W_{0,\rm{div}}^{-1,q'}(D))),\\
\label{4.34} \overline{u}^N\otimes \overline{u}^N\rightharpoonup
\overline{u}\otimes \overline{u}&  ~\mbox{ in }~
L^{\tilde{q}/2}(\overline{\Omega};L^{\tilde{q}/2}(Q)),\\
\label{4.35} \Phi(\overline{u}^N)\rightharpoonup \Phi(\overline{u})&
~\mbox{ in }~   L^2 (\overline{\Omega};L^2(0,T;L_2({U},L^2(D)))).
\end{align} By using \eqref{4.20}-\eqref{4.29}, one has
\begin{equation}\label{4.36}
\begin{split}
\int_{D}&(\overline{u}(t)-\overline{u}_0)\cdot \varphi
dx+\int_0^t\!\!\!\int_{D}\tilde{\overline{S}}:\nabla\varphi
dxdr+\int_0^t\!\!\!\int_{D}\mathcal{S}(\overline{u})\cdot \varphi
dxdr=\int_0^t\!\!\!\int_{D}\overline{u}\otimes \overline{u}
:\nabla\varphi dxdr\\
&+\int_0^t\!\!\!\int_{D}\overline{f}\cdot\varphi
dxdr+\int_0^t\!\!\!\int_{D}\Phi(\overline{u}^N)\cdot \varphi
dxd\overline{W}(r),
\end{split}
\end{equation}
for all $\varphi\in C^{\infty}_{0,\rm{div}}(D)$. It's worth noting
that the limits in the stochastic term is gained by \eqref{2.4} and
\eqref{4.21}
\begin{align*}
\overline{W}^N\rightarrow \overline{W}&  ~\mbox{ in }~
C([0,T],{U}_0),\\
 \Phi(\overline{u}^N)\rightarrow \Phi(\overline{u})&
~\mbox{ in }~ L^2(0,T;L_2({U},L^2(D))
\end{align*}
in probability. By  using Lemma 2.1 in \cite{ADNGRT}, we have
$$\int_0^t\Phi(\overline{u}^N)d\overline{W}^N(s)\rightarrow\int_0^t\Phi(\overline{u})d\overline{W}(s)\ \ \mbox{in}\ \  L^2(0,T;L^2(D)), $$
in probability. Finally, we prove
\begin{equation}\label{4.37}
\begin{split}
\tilde{\overline{S}}=S(x,t,\mathbb{D}(\overline{u})).
\end{split}
\end{equation}
It follows from equation \eqref{4.36}, $\int_{D}\overline{u}\otimes
\overline{u}: \mathbb{D}(\overline{u})dx=0$ and It\^{o}'s formula
that
\begin{equation*}
\begin{split}
\frac{1}{2}\|\overline{u}(t)\|^2_{L^2(D)}
&=\frac{1}{2}\|\overline{u}_0\|^2_{L^2(D)}-\int_0^t\!\!\!\int_D\tilde{\overline{S}}:\mathbb{D}(\overline{u})dxdr-\int_0^t\!\!\!\int_D
\mathcal{S}(\overline{u})\cdot \overline{u}dxdr\\
&\quad+\int_0^t\!\!\!\int_D\overline{f}\cdot
\overline{u}dxdr+\int_0^t\!\!\!\int_D
\overline{u}\cdot \Phi(\overline{u}) dxd\overline{W}(r)\\
&\quad+{\frac{1}{2}\sum_{i=1}^N\int_0^t\!\!\!\int_D|\Phi(\overline{u})e_i|^2dxdr}.
\end{split}
\end{equation*} Similarly,
\begin{equation*}
\begin{split}
\frac{1}{2}\|\overline{u}^N(t)\|^2_{L^2(D)}
&=\frac{1}{2}\|\mathcal{P}^N\overline{u}_0\|^2_{L^2(D)}-\int_0^t\!\!\!\int_DS(x,r,\mathbb{D}(\overline{u}^N)):\mathbb{D}(\overline{u}^N)dxdr\\
&\quad-\int_0^t\!\!\!\int_D \mathcal{S}(\overline{u}^N)\cdot
\overline{u}^Ndxdr+\int_0^t\!\!\!\int_D\overline{f}\cdot
\overline{u}^Ndxdr\\
&\quad+\int_0^t\!\!\!\int_D \overline{u}^N\cdot
\Phi(\overline{u}^N)dxd\overline{W}^{N,N}(r)
+{\frac{1}{2}\sum_{i=1}^N\int_0^t\!\!\!\int_D|\Phi(\overline{u}^N)e_i|^2dxdr}.
\end{split}
\end{equation*}
Subtracting these two equality and applying expectation,
we get
\begin{align*}
&\overline{E}\left(\int_0^T\!\!\!\int_D\!\left(S(x,r,\mathbb{D}(\overline{u}^N))-S(x,r,\mathbb{D}(\overline{u}))\right):
\mathbb{D}(\overline{u}^N-\overline{u})dxdr\right)\\
&\quad+\overline{E}\left(\int_0^T\!\!\!\int_D\left(\mathcal{S}(\overline{u}^N)-\mathcal{S}(\overline{u})\right):\mathbb{D}(\overline{u}^N-\overline{u})dxdr\right)\\
&=\frac{1}{2}\overline{E}\left(\int_D\left(|\overline{u}(T)|^2-|\overline{u}^N(T)|^2\right)dx+
\int_D\left(|\mathcal{P}^N\overline{u}_0^N|^2-|\overline{u}_0|^2\right)dx\right)\\
&\quad+\overline{E}\left(\int_0^T\!\!\!\int_D\left(\tilde{\overline{S}}-S(x,r,\mathbb{D}(\overline{u}^N))\right):\mathbb{D}(\overline{u})dxdr
-\int_0^T\!\!\!\int_DS(x,r,\mathbb{D}(\overline{u})):\mathbb{D}(\overline{u}^N-\overline{u})dxdr\right)\\
&\quad+\overline{E}\left(\int_0^T\!\!\!\int_D\left(\mathcal{S}(\overline{u})-\mathcal{S}(\overline{u}^N)\right):\overline{u}dxdr
-\int_0^T\!\!\!\int_D\mathcal{S}(\overline{u})\cdot(\overline{u}^N-\overline{u})dxdr\right)\\
&\quad+\overline{E}\left(\int_0^T\!\!\!\int_D\overline{f}\cdot(\overline{u}^N-\overline{u})dxdr\right)
+\overline{E}\left({\frac{1}{2}\sum_{i=1}^N\int_0^T\!\!\!\int_D|\Phi(\overline{u}^N)e_i|^2dxdr}\right)\\
&\quad-\overline{E}\left({\frac{1}{2}\sum_{i=1}^N\int_0^T\!\!\!\int_D|\Phi(\overline{u})e_i|^2dxdr}\right).
\end{align*}
By using \eqref{1.4}, \eqref{4.29} and
$\lim\inf_{N\rightarrow\infty}\overline{E}\left[\int_{D}\left(|\overline{u}^N(T)|^2-|\overline{u}(T)|^2\right)dx\right]\geq0$
which follows from the lower semi-continuity and weak convergence of
$\overline{u}^N(T)$, we can infer that
\begin{equation*}
\begin{split}
&\lim_{N\rightarrow\infty}\overline{E}\left[\int_0^T\!\!\!\int_D\left(S(x,r,\mathbb{D}(\overline{u}^N)-S(x,r,\mathbb{D}(\overline{u}))\right)
:\mathbb{D}(\overline{u}^N-\overline{u})dxdr\right]\\
&\leq\frac{1}{2}\lim_{N\rightarrow\infty}\overline{E}\left[\sum_{i=1}^N\int_0^T\!\!\!\int_D(|\Phi(\overline{u}^N)e_i|^2-|\Phi(\overline{u})e_i|^2)dxdr\right].
\end{split}
\end{equation*}
By \eqref{4.20}, \eqref{4.21}, \eqref{2.1} and \eqref{2.2}, we have
\begin{equation*}
\begin{split}
\overline{E}\left({\sum_{i=1}^N\int_0^T\!\!\!\int_D|\Phi(\overline{u}^N)e_i|^2dxdr}\right)
\rightarrow\overline{E}\left({\sum_{i=1}^N\int_0^T\!\!\!\int_D|\Phi(\overline{u})e_i|^2dxdr}\right),
\end{split}
\end{equation*}
after letting $N\rightarrow\infty$. Then
\begin{equation*}
\begin{split}
\lim_{N\rightarrow\infty}\overline{E}\left[\int_0^T\!\!\!\int_D\left(S(x,r,\mathbb{D}(\overline{u}^N)-S(x,r,\mathbb{D}(\overline{u}))\right):
\mathbb{D}(\overline{u}^N-\overline{u})dxdr\right]=0.
\end{split}
\end{equation*}
Thanks to \eqref{1.4} and the monotonicity of $S$, we have
$$\mathbb{D}(\overline{u}^N)\rightarrow \mathbb{D}(\overline{u})\quad \overline{\mathbb P}\otimes\mathcal{L}^{n+1}-\rm{a.e.}.$$
This implies \eqref{4.37} and we complete the proof of Lemma 4.2.
\end{proof}
\begin{Corollary}
Let the assumptions of Lemma 4.2 be satisfied and
$$\int_{L^2_{\rm{div}}(D)}\|v\|^\beta_{L^2(D)}d\mu_0(v)<\infty,\quad \int_{L^2(Q)}\|\mathbf{g}\|^\beta_{L^2(Q)}d\mu_f(\mathbf{g})<\infty$$
for some $\beta\geq2$. Then there exists a martingale weak solution to \eqref{4.1} such that
\begin{equation*}
\begin{split}
E&\left(\sup_{t\in(0,T)}\int_{D}|\overline{u}(t)|^2dx+\int_Q|\nabla \overline{u}|^qdxdt+\varepsilon\int_Q|\overline{u}^N|^{\tilde{q}}dxdt\right)^{\beta/2}\\
\leq
&cE\left(1+\int_{L^2_{\text{div}}(D)}\|v\|^2_{L^2(D)}d\mu_0(v)+\int_{L^2(Q)}\|\mathbf{g}\|^2_{L^2(Q)}d\mu_f(\mathbf{g})\right)^{\beta/2},
\end{split}
\end{equation*}
where $c$ is independent of $\varepsilon$.
\end{Corollary}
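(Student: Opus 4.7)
The plan is to prove the enhanced moment bound by revisiting the Galerkin energy identity used for Lemma \ref{lemma4.1} at the $\beta/2$-th moment level, and then transferring the estimate to the new probability space via Proposition \ref{prop4.2} together with weak lower semicontinuity. Existence of the martingale weak solution itself is already furnished by Lemma 4.2 since $\beta\geq 2$ implies the hypothesis \eqref{4.4} in particular.

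First I would return to the It\^{o} identity \eqref{4.5} for $\frac{1}{2}\|u^N(t)\|_{L^2(D)}^2$ at the Galerkin level. Using the coercivity \eqref{1.3} and Korn's inequality, I obtain the pointwise-in-$t$ estimate
\begin{align*}
\|u^N(t)\|_{L^2}^2 &+ c\int_0^t\!\|\nabla u^N\|_{L^q}^q\,dr + c\varepsilon\int_0^t\!\|u^N\|_{L^{\tilde q}}^{\tilde q}\,dr\\
&\leq c\|u_0\|_{L^2}^2 + c + c\int_0^t\!\|f\|_{L^2}\|u^N\|_{L^2}\,dr + cM^N(t) + c\int_0^t\!\|u^N\|_{L^2}^2\,dr,
\end{align*}
where $M^N(t):=\int_0^t\!\!\int_D u^N\cdot\Phi(u^N)\,dx\,dW^N(r)$ and the It\^{o} correction $\sum_i\|g_i(u^N)\|_{L^2}^2$ has been absorbed by \eqref{2.1}. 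Taking $\sup_{t\in[0,T]}$, raising to the power $\beta/2$, and applying expectation, I control the martingale contribution via Burkholder-Davis-Gundy, Cauchy-Schwarz and \eqref{2.1}:
\begin{align*}
E\sup_t|M^N(t)|^{\beta/2}&\leq cE\Big(\sup_t\|u^N\|_{L^2}^2\int_0^T\!(1+\|u^N\|_{L^2}^2)\,dr\Big)^{\beta/4}\\
&\leq \delta E\sup_t\|u^N\|_{L^2}^\beta + c(\delta)E\Big(1+\int_0^T\!\|u^N\|_{L^2}^2\,dr\Big)^{\beta/2}.
\end{align*}
The deterministic forcing term is handled analogously by Young's inequality. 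Choosing $\delta$ small enough to absorb $E\sup_t\|u^N\|_{L^2}^\beta$ into the left-hand side, and then applying Gronwall's inequality to $\varphi(t):=E\sup_{s\leq t}\|u^N(s)\|_{L^2}^\beta$, yields the claimed bound at the Galerkin level, uniformly in $N$ and $\varepsilon$.

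To transfer the estimate to $\overline u$, Proposition \ref{prop4.2} supplies equality of laws, so the identical bound is inherited by $\overline{u}^N$. The convergences \eqref{4.22}-\eqref{4.23} and \eqref{4.29}-\eqref{4.30}, combined with weak lower semicontinuity of the $L^\infty_t L^2_x$-norm (applied to $\sup$ via Fatou), the $L^q_{t,x}$-norm of $\nabla\overline u$, and the $L^{\tilde q}_{t,x}$-norm of $\overline u$, propagate the bound to $\overline u$ on $(\overline\Omega,\overline{\mathscr F},\overline{\mathbb P})$.

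The main obstacle is balancing the BDG-Young decomposition at the $\beta/2$-moment level: unlike Lemma \ref{lemma4.1}, one must split $(\sup_t\|u^N\|_{L^2}^2)^{\beta/4}\cdot(\int_0^T(1+\|u^N\|_{L^2}^2)\,dr)^{\beta/4}$ so that precisely one factor of the sup can be absorbed on the left with coefficient strictly less than one, while the remaining time-integrated factor is controlled via Gronwall applied to the $\beta$-th moment $\varphi(t)$ rather than the first moment as in Lemma \ref{lemma4.1}. Parallel care is required for the drift term $\int f\cdot u^N$ and for the It\^{o}-correction summation, so that all absorptions can be performed simultaneously and the final constant remains independent of $\varepsilon$ and $N$.
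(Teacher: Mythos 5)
Your proposal is correct and follows essentially the same route as the paper: return to the It\^{o} identity \eqref{4.5}, raise to the $\beta/2$-th moment, treat the drift term via Young, treat the martingale term via Burkholder--Davis--Gundy, Cauchy--Schwarz in $x$, the linear-growth bound \eqref{2.1}, and then a Young split that absorbs one $\sup$-factor into the left-hand side, followed by Gronwall; the transfer to $\overline u$ via equality of laws and weak lower semicontinuity is also the intended step. The paper leaves the transfer step more implicit and writes the intermediate bounds in integral form, but the decomposition and the absorption mechanism are the same as yours.
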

\begin{proof}
It follows from \eqref{4.5} that
\begin{equation*}
\begin{split}
&\frac{1}{2}E\left(\sup_{t\in(0,T)}\int_{D}|u^N(t)|^2dx\right)^{\beta/2}\!\!\!+E\left(\int_Q|\nabla u^N|^qdxdt+\varepsilon\int_Q|u^N|^{\tilde{q}}dxdt\right)^{\beta/2}\\
&\lesssim E\!\left(1\!+\!\int_{D}|u_0|^2dx+\int_0^T\!\!\!\!\int_D|f||u^N|dxdr\right)^{\beta/2}\!\!\!+\!\!E\!
\left(\sup_{t\in(0,T)}\left|\int_0^t\!\!\!\int_{D}u^N\cdot \Phi(u^N)dxdW^N(r)\right|\right)^{\beta/2}\\
&\quad+E\!\left(\sum_{i=1}^N\int_0^T\!\!\!\int_D|\Phi(u^Ne_i|^2dxdr\right)^{\beta/2}.
\end{split}
\end{equation*}
In view of Young's inequality, we obtain
\begin{equation*}
\begin{split}
E\left(\int_0^T\!\!\!\int_D|f||u^N|dxdr\right)^{\beta/2}&\leq c(\delta) E\left(\int_Q|f|^2dxdt\right)^{\beta/2}+
\delta E\left[\int_0^T\left(\int_D|u^N|^2dx\right)^{\beta/2}dr\right]\\
&\leq c(\delta) E\left(\int_Q|f|^2dxdt\right)^{\beta/2}+\delta
E\left(\sup_{t\in(0,T)}\int_D|u^N|^2dx\right)^{\beta/2}.
\end{split}
\end{equation*}
By the Burkholder-Davis-Gundy inequality, \eqref{2.1}, H\"{o}lder's
inequality and Young's inequality, one deduces that
\begin{equation*}
\begin{split}
&E\left(\sup_{t\in(0,T)}\left|\int_0^t\!\!\!\int_{D}u^N\cdot \Phi(u^N)dxdW^N(r)\right|\right)^{\beta/2}\\
&=E\left(\sup_{t\in(0,T)}\left|\sum_{i}\int_0^t\!\!\!\int_{D}u^N\cdot g_i(u^N)dxd\beta_i(r)\right|\right)^{\beta/2}\\
&\leq cE\left(\int_0^T\sum_{i}\left(\int_{D}u^N\cdot g_i(u^N)dx\right)^2dt\right)^{\beta/4}\\
&\leq cE\left(\int_0^T\left(\sum_{i}^N\int_{D}|u^N|^2dx\cdot \int_{D}|g_i(u^N)|^2dx\right)dt\right)^{\beta/4}\\
&\leq c(\delta)
E\left(1+\int_0^T\int_{D}|u^N|^2dxdt\right)^{\beta/2}+\delta
E\left(\sup_{t\in(0,T)}\int_D|u^N|^2dx\right)^{\beta/2}.
\end{split}
\end{equation*}
So we have
\begin{equation*}
\begin{split}
E&\left(\sup_{t\in(0,T)}\left(\int_{D}|u^N|^2dx\right)^{\beta/2}\right)+E\left(\int_Q|\nabla u|^qdxdt+\varepsilon\int_Q|u^N|^{\tilde{q}}dxdt\right)^{\beta/2}\\
&\leq
cE\left(1+\int_{D}|u_0|^2dx+\int_Q|f|^2dxdt\right)+cE\left(\int_0^T\left(\int_{D}|u^N|^2dx\right)^{\beta/2}dt\right).
\end{split}
\end{equation*}
We apply Gronwall's inequality to get
\begin{equation}\label{4.38}
\begin{split}
E&\left(\sup_{t\in(0,T)}\int_{D}|u^N|^2dx\right)^{\beta/2}+E\left(\int_Q|\nabla u^N|^qdxdt+\varepsilon\int_Q|u^N|^{\tilde{q}}dxdt\right)^{\beta/2}\\
&\leq cE\left(1+\int_{D}|u_0|^2dx+\int_Q|f|^2dxdt\right)^{\beta/2},
\end{split}
\end{equation}
which gives the claimed inequality.
\end{proof}

\section{Non-stationary Flows}
In this section, we approximate the original equation by some
equations satisfying the assumptions in Section 4. By using Lemma
4.1, we get a solution to this approximated system, meanwhile we get
a priori estimates and a weak convergent subsequence. Finally, we
use the $L^{\infty}$-truncation to pass to the limit in the
nonlinear stress deviator.

\subsection{A priori estimates and weak convergence}

Let's consider the equation:
\begin{equation}\label{5.1}
\begin{cases}
du+\nabla\cdot(u\otimes u-S+p\text{I})dt+\frac{1}{m}|u|^{\tilde{q}-2}udt=fdt+\Phi(u)dW,\\
u|_{t=0}=u_0.
\end{cases}
\end{equation}
From Lemma 4.1 and Lemma 4.2 for $\varepsilon=\frac{1}{m}$, it
follows that there exists a martingale weak solution
$((\Omega,\mathscr{F},(\mathscr{F})_{t\geq0},\mathbb{P}),u^m,u^m_0,f^m,W)$
to \eqref{5.1} with $u^m\in V_{q,\tilde{q}}$, $\mu_0=\mathbb{P}\circ
(u^m_0)^{-1}$ and $\mu_f=\mathbb{P}\circ (f^m)^{-1}$. For
simplicity, we omit the overline. Then, there holds
\begin{equation*}
\begin{split}
&\int_{D}(u^m(t)-u^m_0)\cdot\varphi dx+\frac{1}{m}\int_0^t\!\!\!\int_{D}|u^m|^{\tilde{q}-2}u^mdxd\tau+\int_0^t\!\!\!\int_{D}S(x,r,\mathbb{D}(u^m)):\mathbb{D}(\varphi) dxdr\\
&=\int_0^t\!\!\!\int_{D}u^m\otimes u^m :\mathbb{D}(\varphi)
dxdr+\int_0^t\!\!\!\int_{D}f^m\cdot \varphi dr
dx+\int_0^t\!\!\!\int_{D}\Phi(u^m)\cdot \varphi dx dW(r),
\end{split}
\end{equation*}
for all $\varphi\in C_{0,\text{div}}^{\infty}(D)$.

From \cite{NISW} (beginning of the proof of Thm 2.7 on p.9) we know
that the probability space and the Brownian motion $W$ can be chosen
independently of m. By using Lemma 4.1, we obtain the uniform
estimates for $u^m$:  $$u^m\in L^2(\Omega,;L^\infty(0,T;L^2(D)))\cap
L^{q}(\Omega;L^q(0,T;W_{0,\text{div}}^{1,q}(D))).$$ It follows from
Corollary 4.1 and \eqref{2.4} that
\begin{equation}\label{5.2}
\begin{split}
E\left(\sup_{t\in(0,T)}\left(\int_{D}|u^m|^2dx\right)^{\beta/2}\right)+E\left(\int_Q|\nabla
u^m|^qdxdt+\frac{1}{m}\int_Q|u^m|^{\tilde{q}}dxdt\right)^{\beta/2}
\leq c(\beta).
\end{split}
\end{equation}
With a parabolic interpolation and the choice of $\beta$, we have
\begin{equation}\label{5.3}
\begin{split}
E\left(\sup_{t\in(0,T)}\int_{Q}|u^m|^{r_0}dxdt\right)\leq c,
\end{split}
\end{equation}
for all $r_0:=q\frac{n+2}{n}$, uniformly in $m$. By using
\eqref{5.2}, \eqref{5.3} and the assumption $q>\frac{2n+2}{n+2}$, we
obtain
\begin{equation}\label{5.4}
\begin{split}
E\left(\int_{Q}|u^m\otimes u^m|^{q_0}dxdt+\int_{Q}|\nabla(u^m\otimes
u^m)|^{q_0}dxdt\right)\leq c,
\end{split}
\end{equation}
for some $q_0>1$. After passing to subsequence, one has 
\begin{align}\label{5.5}
u^m\rightharpoonup u &~\mbox{ in }~  L^{\frac{\beta}{2}q} (\Omega
;L^q(0,T;W_{0,\text{div}}^{1,q}(D))),\\
\label{5.6}
u^m\rightharpoonup u& ~\mbox{ in }~  L^{\beta}(\Omega ;L^{\gamma}(0,T;L^2(D))),\; \forall \gamma<\infty,\\
\label{5.7} \frac{1}{m}|u^m|^{\tilde{q}-2}u^m\rightharpoonup 0&
~\mbox{ in }~
L^{\frac{\beta}{2}\tilde{q}'}(\Omega ;L^{\tilde{q}'}(Q)),\\
\label{5.8}
S(x,t,\mathbb{D}(u^m))\rightharpoonup \tilde{S}&  ~\mbox{ in }~  L^{q'}(\Omega;L^{q'}(Q)),\\
\label{5.9}
S(x,t,\mathbb{D}(u^m))\rightharpoonup \tilde{S}& ~\mbox{ in }~   L^{q'} (\Omega;L^{q'}(0,T;W^{-1,q'}(D))),\\
\label{5.10}
u^m\otimes u^m\rightharpoonup U&  ~\mbox{ in }~   L^{q_0}(\Omega;L^{q_0}(0,T;W^{1,q_0}(D))),\\
\label{5.11} \Phi(u^m)\rightharpoonup \tilde{\Phi}&  ~\mbox{ in }~
L^\beta (\Omega;L^\gamma(0,T;L_2(U,L^2(D)))),\; \forall
\gamma<\infty.
\end{align}
Moreover, we have
$$u\in L^{\beta}(\Omega;L^{\infty}(0,T;L^2(D))),$$
$$\tilde{\Phi}\in L^\beta (\Omega;L^\infty(0,T;L_2({U},L^2(D)))).$$
Let
\begin{align*}
H_1^m&:=S(x,t,\mathbb{D}(u^m)),\\
H_2^m&:=\nabla\Delta^{-1}f^m+\nabla\Delta^{-1}\left(\frac{1}{m}|u^m|^{\tilde{q}-2}u^m\right)+u^m\otimes
u^m,\\
\Phi^m&:=\Phi(u^m).
\end{align*}
{From Theorem 3.1 and
Corollary 3.1, we know that there exist the functions} $p_h^m$,
$p_1^m$, $p_2^m$ which are adapted to $\overline{\mathscr{F}}_t$ and
$\Phi_p^m$ which is progressively measurable such that
\begin{equation}\label{5.12}
\begin{split}
&\int_{D}(u^m(t)-u^m_0-\nabla p^m_h(t))\cdot\varphi dx+\int_0^t\!\!\!\int_{D}(H_1^m-p_1^m\text{I}):\nabla\varphi dxdr\\
&=\int_0^t\!\!\!\int_{D}\text{div}(H_2^m-p_2\text{I})\cdot\varphi
dxdr+\int_0^t\!\!\!\int_{D}\Phi^m\cdot\varphi dxdW(r)
+\int_0^t\!\!\!\int_{D}\Phi_p^m \cdot\varphi dxdW(r).
\end{split}
\end{equation}
Using the continuity of $\nabla\Delta^{-1}$ from $L^{q_0}(D)$ to
$W^{1,q_0}(D)$, we have
\begin{align}\label{5.13}
&H_1^m\in L^{\frac{\beta}{2}q'}(\Omega;L^{q'}(Q)),\\
\label{5.14}
&H_2^m\in L^{q_0}(\Omega;L^{q_0}(0,T;W^{1,q_0}(D))),\\
\label{5.15}
&\Phi^m \in  L^\beta (\Omega;L^\infty(0,T;L_2({U},L^2(D)))),
\end{align}
uniformly in $m$. Thanks to the estimates of Theorem 3.1 and
Corollary 3.1, we obtain the following uniform bounds for the
pressure functions:
\begin{align}\label{5.16}
&p_h^m \in  L^\beta (\Omega;L^\infty(0,T;L^2(D))),\\
\label{5.17}
&p_1^m\in L^{\frac{\beta}{2}q'}(\Omega;L^{q'}(Q)),\\
\label{5.18}
&p_2^m\in L^{q_0}(\Omega;L^{q_0}(0,T;W^{1,q_0}(D))),\\
\label{5.19}
&\Phi_p^m \in  L^\beta (\Omega;L^\infty(0,T;L_2({U},L^2(D)))),
\end{align}
uniformly in $m$.

For the pressure function $p_h^m$, since $\Delta p_h^m=0$, by using
regularity theory for harmonic functions and theorem 3.1, one has
\begin{equation}\label{5.20}
\begin{split}
p_h^m\in L^{\beta}(\Omega;L^{\gamma}(0,T;W^{k,\infty}_{loc}(D))),
\end{split}
\end{equation}
for all $k\in\mathbb{N}$. Therefore, for arbitrary $\gamma<\infty$,
we obtain the following convergence:
\begin{align}\label{5.21}
p_h^m\rightharpoonup p_h&~\mbox{ in }~ L^{\beta}(\Omega;L^{\gamma}(0,T;W^{k,\gamma}_{loc}(D))),\\
\label{5.22}
p_1^m\rightharpoonup p_1&~\mbox{ in }~ L^{\frac{\beta}{2}q'}(\Omega;L^{q'}(Q)),\\
\label{5.23}
p_2^m\rightharpoonup p_2&~\mbox{ in }~ L^{q_0}(\Omega;L^{q_0}(0,T;W^{1,q_0}(D))),\\
\label{5.24} \Phi_p^m\rightharpoonup \Phi_p &~\mbox{ in }~ L^\beta
(\Omega;L^\gamma(0,T;L_2({U},L^2(D)))),
\end{align}
after passing to subsequences.

\subsection{Approximate to $u\otimes u$ and $\Phi(u)$}
In this subsection, we  show that the limit functions in \eqref{5.5}
satisfy $U=u\otimes u$ and $\tilde{\Phi}=\Phi(u)$ by using the
tightness of $u^m$. 
It follows from \eqref{5.1}-\eqref{5.3} that
$$E\left(\left\|u^m(t)-\int_0^t\Phi(u^m)dW(r)\right\|_{W^{1,q_0}(0,T;W^{-1,q_0}_{\text{div}}(D))}\right)\leq c.$$
We can deal with the stochastic term similar to \eqref{4.17}. By
using \eqref{5.2} with $r_0>2$ and \eqref{2.1}, we have
$$E\left(\left\|\int_0^t\Phi(u^m)dW(r)\right\|_{C^{\Lambda}([0,T];L^2(D))}\right)\leq c\left(1+\int_{\Omega\times Q}|u^m|^{r_0}dxdtd\mathbb{P}\right)\leq c,$$
for $\Lambda\in[0,1/2)$. Combining the both inequality above, we
obtain
\begin{equation}\label{5.25}
\begin{split}
E\left(\left\|u^m\right\|_{C^{\Lambda}([0,T];W^{-1,q_0}_{\text{div}}(D))}\right)\leq
c,
\end{split}
\end{equation}
and also for some $\lambda>0$
\begin{equation}\label{5.26}
\begin{split}
E\left(\left\|u^m\right\|_{W^{\lambda,q_0}(0,T;W^{-1,q_0}_{\text{div}}(D))}\right)\leq
c.
\end{split}
\end{equation}
On account of \eqref{5.2}, an interpolation
with $L^{q_0}(0,T;W^{1,q_0}_{0,\text{div}}(D))$ shows
\begin{equation}\label{5.27}
\begin{split}
E\left(\left\|u^m\right\|_{W^{\kappa,q_0}(0,T;L^{q_0}_{\text{div}}(D))}\right)\leq
c.
\end{split}
\end{equation}
for some $\kappa>0$.

Next, we prepare the setup for our compactness method. We define the
path space of $(u^m,p_h^m,p_1^m,p_2^m,\Phi_p^m,W,u_0,f)$ by
\begin{equation*}
\begin{split}
\mathcal{V}:&=L^\gamma(0,T;L^\gamma_{\text{div}}(D))\times L^\gamma(0,T;L^\gamma_{loc}(D))\times(L^{q'}(Q),w)\times (L^{q_0}(0,T;W^{1,q_0}(D)),w)\\
&\quad\times (L^\gamma(0,T;L_2({U},L^2(D))),w)\times
C([0,T],{U}_0)\times L^2(D)\times L^2(Q),
\end{split}
\end{equation*}
where $w$ refers to the weak topology. Let us denote by $\nu_{u^m}$,
$\nu_{p_h^m}$, $\nu_{p_1^m}$, $\nu_{p_2^m}$, $\nu_{\Phi_p^m}$,
respectively,  the law of $u^m$, $p_h^m$, $p_1^m$, $p_2^m$ and
$\Phi_p^m$. By $\nu_{W}$, we denote the law of $W$ on
$C([0,T],{U}_0)$. The joint law of $u^m$, $p_h^m$, $p_1^m$, $p_2^m$,
$\Phi_p^m$, $W$, $u_0$ and $f$ on $\mathcal{V}$ is denoted by
$\nu^m$.

\begin{Proposition}
The set $\{\nu^m|m\in\mathbb{N}\}$ is tight on $\mathcal{V}$.
\end{Proposition}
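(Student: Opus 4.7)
The plan is to establish tightness of the eight marginal laws of $\nu^m$ separately and then conclude joint tightness from the product structure of $\mathcal{V}$: given $\eta>0$, choose a compact subset for each factor on which the marginal has mass at least $1-\eta/8$; the product is a compact subset of $\mathcal{V}$ carrying joint mass at least $1-\eta$. This mirrors the proof of Proposition 4.1.

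For $\nu_{u^m}$ I would argue exactly as in Step 1 of the proof of Proposition 4.1. Combining the uniform estimate \eqref{5.2} in $V_{q,\tilde q}$ with the fractional-in-time bound \eqref{5.27}, an Aubin--Lions--Simon type embedding yields
\[
W^{\kappa,q_0}(0,T;L^{q_0}_{\mathrm{div}}(D))\cap V_{q,\tilde q}\hookrightarrow\hookrightarrow L^\gamma(0,T;L^\gamma_{\mathrm{div}}(D))
\]
for $q_0<\gamma<\tilde q$. Balls in the intersection space are relatively compact in the target strong space, and Markov's inequality then yields $\nu_{u^m}(B_R^c)\le c/R$.

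For the marginals $\nu_{p_1^m},\nu_{p_2^m},\nu_{\Phi_p^m}$, which carry the weak topology, the underlying spaces $L^{q'}(Q)$, $L^{q_0}(0,T;W^{1,q_0}(D))$ and $L^\gamma(0,T;L_2(U,L^2(D)))$ are reflexive and separable, so by Banach--Alaoglu closed balls are weakly compact and (by Eberlein--\v{S}mulian) metrizable on bounded subsets. The uniform bounds \eqref{5.17}--\eqref{5.19} combined with Markov's inequality then produce tightness of each of these marginals. The remaining laws $\nu_W$, $\nu_{u_0}=\mu_0$ and $\nu_f=\mu_f$ are fixed Borel probability measures on the Polish spaces $C([0,T];U_0)$, $L^2(D)$ and $L^2(Q)$, and hence are automatically tight.

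The main obstacle is $\nu_{p_h^m}$, whose target $L^\gamma(0,T;L^\gamma_{\mathrm{loc}}(D))$ carries the strong topology, so I cannot simply appeal to reflexivity. The spatial side is cheap: since $\Delta p_h^m=0$, interior elliptic regularity together with \eqref{5.20} provides arbitrary local smoothness in $x$ uniformly in $m$. The missing ingredient is equicontinuity in time, which I would extract from the representation $p_h^m=p^m-p_0^m$ obtained in the proof of Theorem \ref{theorem3.1}: this writes $p_h^m$ in terms of $u^m(t)-u_0^m$, the deterministic drift $H_1^m+H_2^m$, and a stochastic integral, each of which inherits a fractional time regularity through \eqref{5.26}--\eqref{5.27} and through Kolmogorov's continuity criterion applied to the It\^o integral (exactly as in the derivation of \eqref{5.25}). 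A localised Aubin--Lions embedding then delivers a compact subset of $L^\gamma(0,T;L^\gamma_{\mathrm{loc}}(D))$ and Markov's inequality closes this step. Combining all eight tightness statements completes the proof.
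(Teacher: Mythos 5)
Your overall strategy---prove tightness of each marginal, assemble a compact product set, and invoke Markov---is exactly the paper's, and your treatment of $\nu_{p_1^m},\nu_{p_2^m},\nu_{\Phi_p^m}$ (reflexivity/Banach--Alaoglu on the weakly-topologized factors), of $\nu_W$, and of $\mu_0,\mu_f$ (Polish $\Rightarrow$ tight) coincides with Steps 3--5 of the paper. There are, however, two points worth flagging.

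\textbf{Step 1 uses a bound that is not uniform in $m$.} You propose the compact embedding
$W^{\kappa,q_0}(0,T;L^{q_0}_{\mathrm{div}}(D))\cap V_{q,\tilde q}\hookrightarrow\hookrightarrow L^\gamma(0,T;L^\gamma_{\mathrm{div}}(D))$
for $q_0<\gamma<\tilde q$, lifted verbatim from Step~1 of Proposition~4.1. That was legitimate in Section~4 because $\varepsilon>0$ was fixed, so $u^N$ was uniformly bounded in $L^{\tilde q}(\Omega\times Q)$. In Section~5 one has $\varepsilon=1/m$, and \eqref{5.2} only controls $\frac{1}{m}\int_Q|u^m|^{\tilde q}\,dx\,dt$; the $L^{\tilde q}$ component of $V_{q,\tilde q}$ is therefore not available uniformly in $m$, and the range $\gamma<\tilde q$ is too wide. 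The paper instead works with the space $W^{\kappa,q_0}(0,T;L^{q_0}_{\mathrm{div}}(D))\cap L^\infty(0,T;L^2(D))\cap L^q(0,T;W^{1,q}_{0,\mathrm{div}}(D))$ and obtains compactness into $L^\gamma(0,T;L^\gamma_{\mathrm{div}}(D))$ only for $\gamma<q\frac{n+2}{n}$, the exponent produced by the parabolic interpolation in \eqref{5.3} (citing Remark~1.2 of \cite{JW} and Theorem~5.2 of \cite{HA}). Since in general $\tilde q>q\frac{n+2}{n}$, this is not a cosmetic difference: your claimed range of $\gamma$ is genuinely unavailable and the Markov estimate $\nu_{u^m}(B_R^c)\le c/R$ would use a norm that is not bounded. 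The fix is simply to drop $L^{\tilde q}$ from the ambient space and restrict $\gamma<q\frac{n+2}{n}$.

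\textbf{Step 2 follows a different route.} For $\nu_{p_h^m}$ the paper relies on the compact embedding
$L^\infty(0,T;L^2(D))\cap\{\Delta v(t)=0 \text{ for a.e. }t\}\hookrightarrow\hookrightarrow L^\gamma(0,T;L^\gamma_{\mathrm{loc}}(D))$,
attributed to Wolf's Lemma together with interior regularity for harmonic functions, and then applies Markov's inequality using \eqref{5.16}. You instead propose to obtain time equicontinuity for $p_h^m$ from the representation $p_h^m=p^m-p_0^m$ built in Theorem~3.1, transferring the fractional-in-time bounds \eqref{5.26}--\eqref{5.27} and the Kolmogorov estimate for the It\^o integral, and then invoking a localized Aubin--Lions embedding. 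This is a genuinely different argument. It is arguably more self-contained---the cited ``$L^\infty$-in-time plus harmonicity implies compactness'' statement is delicate precisely because $L^\infty(0,T;L^2)$ alone supplies no time regularity, so in Wolf's original argument (and implicitly here) some time control is indeed being smuggled in---but you only sketch the crucial step of propagating time regularity from $u^m$ and the stochastic integral through the operators $\mathcal{B}^*$ and $\Delta\Delta_D^{-2}\Delta$ to $p_h^m$. If you wish to pursue this route, that propagation needs to be made explicit; otherwise the cleanest fix is to align with the paper and invoke the cited harmonic-pressure compactness lemma directly.
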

\begin{proof}
In order to prove the tightness of $\nu^m$, we need  the following
five steps.

{Step 1: Tightness of $\nu_{u^m}$.} Since $q>\frac{2n+2}{n+2}$, by
using Remark 1.2 in \cite{JW} and Theorem 5.2 in \cite{HA}, we have
$$W^{\kappa,q_0}(0,T;L^{q_0}_{\text{div}}(D))\cap L^\infty(0,T;L^2(D))\cap L^{q}(0,T;W^{1,q}_{0,\text{div}}(D)))\hookrightarrow L^\gamma(0,T;L^\gamma_{\text{div}}(D))$$
compactly for all $\gamma<q\frac{n+2}{n}$. Choosing a ball $B_R$ in
the space $W^{\kappa,q_0}(0,T;L^{q_0}_{\text{div}}(D))\cap
L^\infty(0,T;L^2(D))\cap L^{q}(0,T;W^{1,q}_{\text{div}}(D))$ and
using \eqref{5.3} and \eqref{5.27}, we obtain
\begin{equation*}
\begin{split}
\nu_{u^m}(B_R^c)&=\mathbb{P}(\|u^m\|_{W^{\kappa,q_0}(0,T;L_{\text{div}}^{q_0}(D))}+\|u^m\|_{L^{q}(0,T;W^{1,q}_{\text{div}}(D))}+\|u^m\|_{L^{\infty}(0,T;L^{2}(D))}\geq R)\\
&\leq
\frac{1}{R}E\left[\|u^m\|_{W^{\kappa,q_0}(0,T;L_{\text{div}}^{q_0}(D))}+\|u^m\|_{L^{q}(0,T;W^{1,q}_{\text{div}}(D))}+\|u^m\|_{L^{\infty}(0,T;L^{2}(D))}\right]\leq\frac{c}{R},
\end{split}
\end{equation*}
where $B_R^c$ is the complement of $B_R$. Then we can find $R(\eta)$
such that
$$\nu_{u^m}(B_{R(\eta)})\geq 1-\frac{\eta}{8},$$
for a fixed $\eta>0$. These imply the tightness of $\nu_{u^m}$.

{Step 2: Tightness of $\nu_{p_h^m}$.} It follows from local
regularity theory for harmonic function and Lebesgue dominate
convergence Theorem (cf. \cite{JW}) that
$$L^\infty(0,T;L^2(D))\cap\{\Delta v(t)=0\, \text{ for \,a.e.}\, t\}\hookrightarrow L^\gamma(0,T;L^\gamma_{loc}(D))$$
is compact for the harmonic pressure $p_h^m$.  We choose a ball
$B_R$ in the space $L^\infty(0,T;L^2(D))\cap\{\Delta v(t)=0\, \text{
for \,a.e.}\, t\}$ and use \eqref{5.16} to obtain
\begin{equation*}
\begin{split}
\nu_{p_h^m}(B_R^c)&=\mathbb{P}(\|p_h^m\|_{L^{\infty}(0,T;L^{2}(D))}\geq R)\leq
\frac{1}{R}E\left[\|p_h^m\|_{L^{\infty}(0,T;L^{2}(D))}\right]\leq\frac{c}{R},
\end{split}
\end{equation*}
where $B_R^c$ is the complement of $B_R$. Hence, we can find
$R(\eta)$ such that
$$\nu_{p_h^m}(B_{R(\eta)})\geq 1-\frac{\eta}{8},$$
for a fixed $\eta>0$. This yield
that the law of $p_h^m$ is also tight.

{Step 3: Tightness of $\nu_{p_1^m}$, $\nu_{p_2^m}$ and
$\nu_{\Phi_p^m}$.} Since the reflexivity of the corresponding
spaces, choosing balls $B_{R_1}$ in the space $L^{q'}(Q)$, $B_{R_2}$
in the space $L^{q_0}([0,T];W^{1,q_0}(D))$, $B_{R}$ in the space
$L^{\infty}([0,T];L_2(U,L^2(D)))$, respectively, and by using
\eqref{5.17}-\eqref{5.19}, we have
\begin{equation*}
\begin{split}
\nu_{p_1^m}(B_{R_1}^c)&=\mathbb{P}(\|p_1^m\|_{L^{q'}(Q)}\geq R_1)\leq
\frac{1}{R_1}E\left[\|p_1^m\|_{L^{q'}(Q)}\right]\leq\frac{c}{R_1},
\end{split}
\end{equation*}
\begin{equation*}
\begin{split}
\nu_{\Phi_p^m}(B_{R_2}^c)&=\mathbb{P}(\|\Phi_p^m\|_{L^{q_0}([0,T];W^{1,q_0}(D))}\geq R_2)\leq
\frac{1}{R_2}E\left[\|\Phi_p^m\|_{L^{q_0}([0,T];W^{1,q_0}(D))}\right]\leq\frac{c}{R_2},
\end{split}
\end{equation*}
\begin{equation*}
\begin{split}
\nu_{p_1^m}(B_R^c)&=\mathbb{P}(\|p_1^m\|_{L^{\infty}([0,T];L_2(U,L^2(D))))}\geq R)\leq
\frac{1}{R}E\left[\|p_1^m\|_{L^{\infty}([0,T];L_2(U,L^2(D)))}\right]\leq\frac{c}{R}.
\end{split}
\end{equation*}
Then we can find compact sets for $p_1^m$, $p_2^m$ and $\Phi_p^m$
with measures greater than $1-\frac{\eta}{8}$(or equal).

{Step 4: Tightness of $\nu_W$.} The law $\nu_W$ is tight as it
coincides with the law of $W$ which is a Radon measure on the Polish
space $C([0,T],{U}_0)$. Then there exists a compact subset
$C_{\eta}\subset C([0,T],{U}_0)$ such that $\nu_{W^m}(C_\eta)\geq
1-\frac{\eta}{8}$.

{Step 5: Tightness of  $\mu_0$ and $\mu_f$.} By the same argument,
we can find compact subsets of $L^2_{\text{div}}(D)$ and $L^2(Q)$
such that $\mu_0$ and $\mu_f$ are smaller than $1-\frac{\eta}{8}$.

So, we can find a compact subset
$\mathcal{V}_\eta\subset\mathcal{V}$ such that
$\nu^m(\mathcal{V}_\eta)\geq1-\eta$. Hence,
$\{\nu^m,m\in\mathbb{N}\}$ is tight in the same space.
\end{proof}
By using the Jakubowski-Skorohod Theorem in \cite{AJ}, we obtain the
following result.
\begin{Proposition}
There exists a probability space
$(\overline{\Omega},\overline{\mathscr{F}},\overline{\mathbb{P}})$
with $\mathcal{V}$-valued Borel measurable random variables
$(\overline{u}^m,\overline{p}_h^m,\overline{p}_1^m,\overline{p}_2^m,\overline{\Phi}_p^m,\overline{W}^m,\overline{u}^m_0,\overline{f}^m)$
and
$(\overline{u},\overline{p}_h,\overline{p}_1,\overline{p}_2,\overline{\Phi}_p,\overline{W},\overline{u}_0,\overline{f})$
such that the following hold:

$(1)$ The laws of
$(\overline{u}^m,\overline{p}_h^m,\overline{p}_1^m,\overline{p}_2^m,\overline{\Phi}_p^m,\overline{W}^m,\overline{u}^m_0,\overline{f}^m)$
and
$(\overline{u},\overline{p}_h,\overline{p}_1,\overline{p}_2,\overline{\Phi}_p,\overline{W},\overline{u}_0,\overline{f})$
under $\overline{\mathbb{P}}$ coincide with $\nu^m$ and
$\nu:=\lim_{m\rightarrow\infty}\nu^m$.

$(2)$ The strong convergence:
\begin{align*}
\overline{u}_0^m\rightarrow \overline{u}_0&~\mbox{ \rm in }~ L^{2}(D)\ \ \overline{\mathbb{P}}-\text{a.s.},\\
\overline{u}^m\rightarrow \overline{u}&~\mbox{ \rm in }~ L^{\gamma}(0,T;L^\gamma(D))\ \ \overline{\mathbb{P}}-\text{a.s.},\\
\overline{p}_h^m\rightarrow \overline{p}_h &~\mbox{ \rm in }~ L^{\gamma}(0,T;L^\gamma_{loc}(D))\ \ \overline{\mathbb{P}}-\text{a.s.},\\
\overline{W}^m\rightarrow \overline{W}& ~\mbox{ \rm in }~ C([0,T],{U}_0)\ \ \overline{\mathbb{P}}-\text{a.s.},\\
\overline{f}^m\rightarrow \overline{f}& ~\mbox{ \rm in }~
L^{2}(0,T;L^2(D))\ \ \overline{\mathbb{P}}-\text{a.s.}.
\end{align*}

$(3)$ The weak convergence:
\begin{align*}\overline{p}_1^m\rightharpoonup \overline{p}_1 &~\mbox{ \rm in }~ L^{q'}(Q)\ \ \overline{\mathbb{P}}-\text{a.s.},\\
\overline{p}_2^m\rightharpoonup \overline{p}_2 &~\mbox{ \rm in }~ L^{q_0}(0,T;W^{1,q_0}(D))\ \ \overline{\mathbb{P}}-\text{a.s.},\\
\overline{\Phi}_p^m\rightharpoonup \overline{\Phi}_p &~\mbox{ \rm in
}~ L^r(0,T;L_2({U},L^2(D)))\ \ \overline{\mathbb{P}}-\text{a.s.}.
\end{align*}

$(4)$
$$\int_{\overline{\Omega}}\left(\sup_{t\in[0,T]}\|\overline{W}^m(t)\|^{\alpha}_{{U}_0}\right)d\overline{\mathbb{P}}
=\int_{\Omega}\left(\sup_{t\in[0,T]}\|W(t)\|^{\alpha}_{{U}_0}\right)d\mathbb{P},$$
for all $\alpha<\infty$.
\end{Proposition}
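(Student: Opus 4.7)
The plan is to apply the Jakubowski-Skorohod representation theorem to the tight family $\{\nu^m\}$ on $\mathcal{V}$ furnished by Proposition~5.1. Because several factors of $\mathcal{V}$ carry the weak topology of a reflexive separable Banach space, the classical Skorohod theorem is not available; the Jakubowski-Skorohod version (valid on any topological space on which there exists a countable family of continuous real-valued functions separating points) is the natural substitute. First I would verify this separation-of-points condition: the strong factors are Polish, while on each weak-topology factor $(X,w)$ the norm-dual $X^*$ is separable, so any countable dense sequence $\{\varphi_k\}\subset X^*$ supplies the required separating family via the evaluations $v\mapsto\langle v,\varphi_k\rangle$. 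I would also note that on those factors the weak Borel $\sigma$-algebra coincides with the norm Borel $\sigma$-algebra by separability, so the laws $\nu^m$ are unambiguously defined.

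Once applicability is secured, Prokhorov's theorem extracts a weakly convergent subsequence $\nu^m\rightharpoonup\nu$ on $\mathcal{V}$. The Jakubowski-Skorohod theorem then produces a new probability space $(\overline{\Omega},\overline{\mathscr{F}},\overline{\mathbb{P}})$ carrying an octuple $(\overline{u}^m,\overline{p}_h^m,\overline{p}_1^m,\overline{p}_2^m,\overline{\Phi}_p^m,\overline{W}^m,\overline{u}_0^m,\overline{f}^m)$ with law $\nu^m$, together with a limit octuple $(\overline{u},\overline{p}_h,\overline{p}_1,\overline{p}_2,\overline{\Phi}_p,\overline{W},\overline{u}_0,\overline{f})$ with law $\nu$, such that the former converges $\overline{\mathbb{P}}$-a.s.\ to the latter in the product topology of $\mathcal{V}$. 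Assertion~(1) is then immediate. The a.s.\ convergences in (2) and (3) are obtained by projecting the product-topology convergence onto each coordinate factor and recording the topology there: strong convergence on the $L^\gamma$, $C([0,T],U_0)$ and $L^2$ factors, weak convergence on the three weak-topology factors carrying $\overline{p}_1^m$, $\overline{p}_2^m$ and $\overline{\Phi}_p^m$.

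For the final identity (4), I would exploit $\mathscr{L}(\overline{W}^m)=\mathscr{L}(W)$. The functional $\Psi_K(w):=\min\{\sup_{t\in[0,T]}\|w(t)\|_{U_0}^{\alpha},K\}$ is bounded and continuous on $C([0,T],U_0)$, so the change-of-variable formula gives $\overline{E}\,\Psi_K(\overline{W}^m)=E\,\Psi_K(W)$; letting $K\to\infty$ by monotone convergence delivers (4) for every $\alpha<\infty$. The main obstacle is conceptual rather than computational: one must certify that $\mathcal{V}$, with its mixed strong/weak topology, lies in the class of spaces to which Jakubowski-Skorohod applies, and that the resulting a.s.\ convergence on a product of non-metrizable factors can legitimately be projected coordinate-wise. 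Once this bookkeeping on the topological structure of $\mathcal{V}$ is handled, the remaining items are routine consequences of the representation theorem, combined with the truncation argument above for the integral identity.
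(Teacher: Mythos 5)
Your proof is correct and follows the same route as the paper, which at this point simply invokes the Jakubowski--Skorokhod theorem from \cite{AJ} after the tightness established in the preceding proposition; you merely spell out the verification that $\mathcal{V}$ (a product of Polish spaces and separable reflexive spaces with their weak topologies) admits a countable family of continuous real-valued functions separating points, and how the coordinate-wise a.s.\ convergences in (2)--(3) are read off from the product topology. For (4), your truncation-plus-monotone-convergence argument is sound but slightly roundabout: since $w\mapsto\sup_{t\in[0,T]}\|w(t)\|_{U_0}^{\alpha}$ is a nonnegative Borel function on $C([0,T],U_0)$ and $\overline{W}^m$ and $W$ have the same law there, the identity follows immediately from the change-of-variables formula for pushforward measures, with no truncation needed.
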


By virtue of the equality of laws, we obtain the weak convergence:
\begin{align*}\overline{p}_1^m\rightharpoonup \overline{p}_1 &~\mbox{ in }~ L^{q'}(\overline{\Omega};L^{q'}Q)),\\
\overline{p}_2^m\rightharpoonup \overline{p}_2&  ~\mbox{ in }~ L^{q_0}(\overline{\Omega};L^{q_0}(0,T;W^{1,q_0}(D))),\\
\overline{\Phi}_p^m\rightharpoonup \overline{\Phi}_p & ~\mbox{ in }~  L^{q_0}(\overline{\Omega};L^\gamma(0,T;L_2({U},L^2(D)))).\\
\end{align*}

By Vitali's convergence Theorem, we get the strong convergence:
\begin{align}\label{5.28}
\overline{W}^m\rightarrow \overline{W}&~\mbox{ in }~
L^2(\overline{\Omega};C([0,T],{U}_0)),\\
\label{5.29}
\overline{u}^m\rightarrow \overline{u} &~\mbox{ in }~ L^{\gamma}(\overline{\Omega}\times Q;\overline{\mathbb{P}}\otimes\mathcal{L}^{n+1}),\\
\label{5.30} \overline{u}_0^m\rightarrow \overline{u}_0 &~\mbox{ in
}~ L^{2}(\overline{\Omega}\times
D;\overline{\mathbb{P}}\otimes\mathcal{L}^{n+1}),\\
\label{5.31} \nabla^k\overline{p}_h^m\rightarrow
\nabla^k\overline{p}_h &~\mbox{ in }~
L^{\gamma}(\overline{\Omega}\times (0,T)\times
D';\overline{\mathbb{P}}\otimes\mathcal{L}^{n+1}),\\
\label{5.32} \overline{f}^m\rightarrow \overline{f} &~\mbox{ in }~
L^{2}(\overline{\Omega}\times
Q;\overline{\mathbb{P}}\otimes\mathcal{L}^{n+1}),
\end{align}
for all $\gamma<q\frac{n+2}{n}$ and all $D'\subset\subset D$, after
choosing a subsequence. For the harmonic pressure \eqref{5.31},
applying local regularity theory for harmonic maps above, one has for all $s<\infty$
\begin{align}\label{5.33}
\overline{u}^m\otimes\overline{u}^m\rightharpoonup
\overline{u}\otimes\overline{u} &~\mbox{ in }~  L^{q_0} (\Omega;L^{q_0}(0,T;W^{1,q_0}(D))),\\
\label{5.34} \Phi(\overline{u}^m)\rightharpoonup \Phi(\overline{u})
&~\mbox{ in }~
L^\beta (\Omega;L^{{s}}(0,T;L_2({U},L^2(D)))),\\
\label{5.35} \Phi_p(\overline{u}^m)\rightharpoonup
\Phi_p(\overline{u}) &~\mbox{ in }~  L^\beta
(\Omega;L^{{s}}(0,T;L_2({U},L^2(D)))).
\end{align}

Let $\overline{\mathscr{F}}_t$ be the
$\overline{\mathbb{P}}$-augmented canonical filtration of the
process
$(\overline{u},\overline{p}_h,\overline{p}_1,\overline{p}_2,\overline{\Phi}_p,\overline{W},\overline{f})$,
that is,
$$\overline{\mathscr{F}_t}=\sigma(\sigma(\varrho_t\overline{u},\varrho_t\overline{p}_h,\varrho_t\overline{p}_1,\varrho_t\overline{p}_2,\varrho_t\overline{\Phi}_p,
\varrho_t\overline{W},\varrho_t\overline{f})\cup\{\mathcal{N}\in\overline{\mathscr{F}};\overline{\mathbb{P}}(\mathcal{N})=0\}),$$
for $t\in [0,T]$. As done in the proof of Lemma 4.2, we can also
show that the equation hold on the new probability space, i.e.,
\begin{equation*}
\begin{split}
&\int_{D}(\overline{u}^m(t)-\overline{u}^m_0-\nabla \overline{p}^m_h(t))\cdot\varphi dx+\int_0^t\!\!\!\int_{D}(\overline{H}_1^m-\overline{p}_1^m\textrm{I}):\nabla\varphi dxdr\\
&=\int_0^t\!\!\!\int_{D}\text{div}(\overline{H}_2^m-\overline{p}_2\textrm{I})\cdot\varphi
dxdr+\int_0^t\!\!\!\int_{D}\Phi(\overline{u}^m)\cdot\varphi
dxd\overline{W}(r)+\int_0^t\!\!\!\int_{D}\overline{\Phi}_p^m\cdot\varphi
 dxd\overline{W}(r),
\end{split}
\end{equation*}
$\overline{\mathbb{P}}\otimes \mathcal{L}^1$-a.e. for all
$\varphi\in C_0^\infty(D)$, where
\begin{align*}
\overline{H}_1^m&:=S(x,t,\mathbb{D}(\overline{u}^m)),\\
\overline{H}_2^m&:=\overline{u}^m\otimes
\overline{u}^m+\nabla\Delta^{-1}\left(\frac{1}{m}|\overline{u}^m|^{\tilde{q}-2}\overline{u}^m\right)
+\nabla\Delta^{-1}\overline{f}^m.
\end{align*}
\begin{Remark}
Here we use the test-functions $\varphi\in C_0^\infty(D)$, instead
of $\varphi\in C_{0,\text{div}}^\infty(D)$.
\end{Remark}

Using  Lemma 2.1 in \cite{ADNGRT} and the convergence
\eqref{5.28}-\eqref{5.35}, we obtain the limit equation:
\begin{equation}\label{5.36}
\begin{split}
&\int_{D}(\overline{u}(t)-\overline{u}_0-\nabla \overline{p}_h(t))\cdot\varphi dx+\int_0^t\!\!\!\int_{D}(\overline{H}_1-\overline{p}_1\text{I}):\nabla\varphi dxdr\\
&=\int_0^t\!\!\!\int_{D}\text{div}(\overline{H}_2-\overline{p}_2\textrm{I})\cdot\varphi
dxdr+\int_0^t\!\!\!\int_{D}\Phi(\overline{u})\cdot\varphi
dxd\overline{W}(r)+\int_0^t\!\!\!\int_{D}\overline{\Phi}_p\cdot\varphi
dxd\overline{W}(r),
\end{split}
\end{equation}
for all $\varphi\in C_0^\infty(D)$, where
$$\overline{H}_1:=\tilde{\overline{S}},\; \overline{H}_2:=\overline{u}\otimes\overline{u}+\nabla\Delta^{-1}\overline{f}.$$
It remains to show
$\tilde{\overline{S}}=S(x,t,\mathbb{D}(\overline{u}))$. Let
\begin{align*}
\overline{G}_1^m:&=S(x,t,\mathbb{D}(\overline{u}^m))-\tilde{\overline{S}},\\
\overline{G}_2^m:&=\overline{u}^m\otimes\overline{u}^m-\overline{u}\otimes\overline{u}+
\nabla\Delta^{-1}\left(\frac{1}{m}|\overline{u}^m|^{\tilde{q}-2}\overline{u}^m\right)+\nabla\Delta^{-1}(\overline{f}^m-\overline{f}),\\
\overline{\Phi}^m:&=(\Phi(\overline{u}^m),\;-\Phi(\overline{u})), \quad \overline{\Phi}_\vartheta^m:=(\Phi_p(\overline{u}^m),-\Phi_p(\overline{u})),\\
\overline{\vartheta}_h^m:&=\overline{p}_h^m-\overline{p}_h,\;\overline{\vartheta}_1^m:
=\overline{p}_1^m-\overline{p}_1,\;\overline{\vartheta}_2^m:=\overline{p}_2^m-\overline{p}_2.
\end{align*}
Then the following convergence hold:
\begin{align}\label{5.39}
\overline{u}^m-\overline{u}\rightharpoonup 0& ~\mbox{ in }~
L^{\frac{\beta}{2}q} (\overline{\Omega};L^q(0,T;W_{0}^{1,q}(D))),\\
\label{5.40} \overline{u}^m-\overline{u}\rightharpoonup 0& ~\mbox{
in }~
L^{\beta}(\overline{\Omega};L^{\gamma}(0,T;L^2(D))),\; \forall \gamma<\infty,\\
\label{5.41} \overline{G}_1^m\rightharpoonup 0 &~\mbox{ in }~
L^{\frac{\beta}{2}q'}(\overline{\Omega};L^{q'}(Q)),\\
\label{5.42} \overline{G}_2^m\rightharpoonup 0& ~\mbox{ in }~
L^{q_0}
(\overline{\Omega};L^{q_0}(0,T;W^{1,q_0}(D))),\\
\label{5.43} \overline{\Phi}^m-\overline{\Phi}\rightharpoonup 0&
~\mbox{ in }~
L^{\beta}(\overline{\Omega};L^{\gamma}(0,T;L_2({U},L^2(D)))),\;
\forall \gamma<\infty,
\end{align}
where $\overline{\Phi}=(\Phi(\overline{u}),-\Phi(\overline{u})).$
For the pressure functions, we have
\begin{align}\label{5.44}
\overline{\vartheta}_h^m\rightarrow 0& ~\mbox{ in }~
L^{\beta}(\overline{\Omega};L^{\gamma}(0,T;W^{k,\gamma}_{loc}(D))),\;
\forall
\gamma<\infty,\\
\label{5.45} \overline{\vartheta}_1^m\rightharpoonup 0& ~\mbox{ in
}~
L^{\frac{\beta}{2}q'}(\overline{\Omega};L^{q'}(Q)),\\
\label{5.46} \overline{\vartheta}_2^m\rightharpoonup 0& ~\mbox{ in
}~ L^{q_0}
(\overline{\Omega};L^{q_0}(0,T;W^{1,q_0}(D))),\\
\label{5.47}
\overline{\Phi}_{\vartheta}^m-\overline{\Phi}_\vartheta\rightharpoonup
0 &~\mbox{ in }~
L^{\beta}(\overline{\Omega};L^{\gamma}(0,T;L_2({U},L^2(D)))),\;
\forall \gamma<\infty.
\end{align}
Moreover, we obtain
\begin{align}\label{5.48}
&\overline{\vartheta}_h^m \in L^{\beta}(\overline{\Omega};L^{\infty}(0,T;L^2(D))),\\
\label{5.49}
&\overline{\Phi}^m\in L^{\beta}(\overline{\Omega};L^{\infty}(0,T;L_2({U},L^2(D)))),\\
\label{5.50} &\overline{\Phi}_{\vartheta}^m\in
L^{\beta}(\overline{\Omega};L^{\infty}(0,T;L_2({U},L^2(D)))),
\end{align}
uniformly in $m$.

The difference of approximates equation and limit equation read as
\begin{align}\label{5.51}
\notag&\int_{D}(\overline{u}^m(t)-\overline{u}(t)+\overline{u}_0-\overline{u}^m_0-\nabla
\overline{\vartheta}^m_h(t))\cdot\varphi dx+
\int_0^t\!\!\!\int_{D}(\overline{G}^m_1-\overline{\vartheta}^m_1\text{I}):\nabla\varphi dxdr\\
&=\int_0^t\!\!\!\int_{D}\text{div}(\overline{G}^m_2-\overline{\vartheta}^m_2\text{I})\cdot\varphi
dxdr+\int_0^t\!\!\!\int_{D}\overline{\Phi}^m\cdot\varphi
dxd(\overline{W}^m(r),\overline{W}(r))\\
\notag&\quad+\int_0^t\!\!\!\int_{D}\overline{\Phi}_{\vartheta}^m\cdot\varphi
dx d(\overline{W}^m(r),\overline{W}(r))
\end{align}
for all $\varphi\in C_0^\infty(D)$. Define
$\overline{v}^m=\overline{u}^m-\nabla\overline{\vartheta}_h^m$ and
denote $\overline{v}^{m,k}:=\overline{v}^m-\overline{v}^k$, $m\geq
k$. Similarly, we define $\overline{G}_1^{m,k}$,
$\overline{G}_2^{m,k}$, $\overline{\vartheta}_1^{m,k}$,
$\overline{\vartheta}_2^{m,k}$, $\overline{\Phi}^{m,k}$ and
$\overline{\Phi}_{\vartheta}^{m,k}$. Then, we have
\begin{align}\label{5.52}
&\overline{v}^m\rightharpoonup 0 ~\mbox{ in }~ L^{q}
(\overline{\Omega};L^{q}(0,T;W^{1,q_0}_0(D))),\\
\label{5.53} &\overline{v}^m\rightarrow 0 ~\mbox{ in }~
L^{\gamma}(\overline{\Omega}\times (0,T)\times
D';\overline{\mathbb{P}}\otimes\mathcal{L}^{n+1}),
\end{align}
and
\begin{equation}\label{5.54}
\begin{split}
&\int_{D}(\overline{v}^{m,k}-\overline{v}^{m,k}_0)\cdot\varphi dx+\int_0^t\!\!\!\int_{D}(\overline{G}^{m,k}_1-\overline{\vartheta}^{m,k}_1\textrm{I}):\nabla\varphi dxdr\\
&=\int_0^t\!\!\!\int_{D}\text{div}(\overline{G}^{m,k}_2-\overline{\vartheta}^{m,k}_2\textrm{I})\cdot\varphi
dxdr+\int_0^t\!\!\!\int_{D}\overline{\Phi}^{m,k}\cdot\varphi
dxd(\overline{W}^m(r),\overline{W}^k(r))\\
&\quad+\int_0^t\!\!\!\int_{D}\overline{\Phi}_{\vartheta}^{m,k}\cdot\varphi
dx d(\overline{W}^m(r),\overline{W}^k(r)),
\end{split}
\end{equation}
for all $\varphi\in C_0^\infty(D)$.

\subsection{$L^\infty$-truncation}
From density arguments, we are allowed to test the equations with
$\varphi\in W_0^{1,p}\cap L^\infty(D)$. Since the function
$\overline{u}(w,t,\cdot)$ does not belong to this class,  the
$L^\infty$-truncation {is used to} the deterministic
problem in \cite{JW}. In this subsection, we apply the
$L^\infty$-truncation to the stochastic setting.

Let
\begin{align*}h_L(s):=\int_0^s\Psi_L(\theta)\theta d\theta,\;
H_L(\xi):=h_L(|\xi|),\; \Psi_L:=\sum_{l=1}^L\psi_{2^{-l}},\;
\psi_{\delta}:=\psi(\delta s),
\end{align*}
for $L\in\mathbb{N}_0$, where $\psi\in C_0^\infty([0,2])$,
$\psi\equiv0$ on $[0,1]$, $0\leq\psi\leq1$ and $0\leq-\psi'\leq2$.
Denote
$$f_L(u):=\int_D\eta H_L(u)dx,~\mbox{ for }~\eta\in C_0^\infty(D).$$
By using It\^{o}'s formula, we have
\begin{align*}
&\int_{D}\eta H_L(\overline{v}^{m,k}(t))dx\\
&=f_L(\overline{v}^{m,k}(0))+\int_0^tf'_L(\overline{v}^{m,k})d\overline{v}^{m,k}+\frac{1}{2}\int_0^tf''_L(\overline{v}^{m,k})d\langle\overline{v}^{m,k}\rangle(r)\\
&=\int_{D}\eta
H_L(\overline{v}^{m}_0-\overline{v}^{k}_0)dx-\int_0^t\!\!\!\int_{D}\eta(\overline{G}_1^{m,k}-\overline{\vartheta}_1^{m,k}\textrm{I}):\nabla(\Psi_L(|\overline{v}^{m,k}|)
\overline{v}^{m,k})dxdr\\
&\quad-\int_0^t\!\!\!\int_{D}(\overline{G}_1^{m,k}-\overline{\vartheta}_1^{m,k}\textrm{I}):\nabla\eta\otimes(\Psi_L(|\overline{v}^{m,k}|)\overline{v}^{m,k})dxdr\\
&\quad+\int_0^t\!\!\!\int_{D}\eta\Psi_L(|\overline{v}^{m,k}|)\text{div}\left(\overline{G}_2^{m,k}-\overline{\vartheta}_2^{m,k}\textrm{I}\right)\cdot\overline{v}^mdxdr\\
&\quad+\int_{D}\!\!\int_0^t\eta\Psi_L(|\overline{v}^{m,k}|)\overline{v}^{m,k}\cdot \left(\Phi(\overline{u}^{m,k})d\overline{W}^m(r)-\Phi(\overline{u}^{k})d\overline{W}^k(r)\right)dx\\
&\quad+\int_{D}\!\!\int_0^t\eta\Psi_L(|\overline{v}^{m,k}|)\overline{v}^{m,k}\cdot
\left(\Phi_\vartheta(\overline{u}^{m,k})d\overline{W}^m(r)-\Phi_\vartheta(\overline{u}^{k})
d\overline{W}^k(r)\right)dx\\
&\quad+\frac{1}{2}\int_{D}\!\!\int_0^t\eta \mathbb{D}^2H_L(\overline{v}^{m,k})d\langle\int_0^\cdot\Phi(\overline{u}^m)d\overline{W}^m-\int_0^\cdot\Phi(\overline{u}^k)d\overline{W}^k\rangle(r) dx\\
&\quad+\frac{1}{2}\int_{D}\!\!\int_0^t\eta
\mathbb{D}^2H_L(\overline{v}^{m,k})d\langle\int_0^\cdot\Phi_\vartheta(\overline{u}^m)d\overline{W}^m-
\int_0^\cdot\Phi_\vartheta(\overline{u}^k)d\overline{W}^k\rangle(r) dx\\
&=:J_1+J_2+J_3+J_4+J_5+J_6+J_7+J_8.
\end{align*}
$\overline{E}[J_1]\rightarrow0$ if $m,k\rightarrow\infty$, gained by
equation \eqref{5.30} and
$\overline{v}^m(0)-\overline{v}^k(0)=\overline{u}^m(0)-\overline{u}^k(0)$
(see Theorem \ref{theorem3.1} (2)). We are going to show that the
expectation values of $J_3$-$J_7$ vanish if $m,k\rightarrow\infty$.
By using the monotone operator theory, we obtain
$\mathbb{D}(u^m)\rightarrow \mathbb{D}(u)$, a.e.. Clearly,
$\Psi_L(|\overline{v}^{m,k}|)\overline{v}^{m,k}$ are bounded in
$L^\gamma$. By virtue of \eqref{5.29} and the construction of
$\Psi_L$, {after taking a subsequence}, we
have
\begin{align}\label{5.55}
\Psi_L(|\overline{v}^{m,k}|)\overline{v}^{m,k}\rightarrow 0~\mbox{
in }~ L^\gamma(\overline{\Omega}\times
Q;\overline{\mathbb{P}}\otimes\mathcal{L}^{n+1})~\mbox{ as }~
m,k\rightarrow\infty,
\end{align}
for all $\gamma<\infty$. It follows from \eqref{5.39} and
\eqref{5.44} that $\overline{E}[J_3]\rightarrow0$,
$\overline{E}[J_4]\rightarrow0$ if $m,k\rightarrow\infty$. Clearly,
$\overline{E}[J_5]=0$, $\overline{E}[J_6]=0$. Since
$|\mathbb{D}^2H_L|\leq c(L)$, we have
\begin{align*}
J_7&\leq c\sum_{\ell=1}^n\int_{D}\!\!\int_0^td\langle\int_0^\cdot(\Phi(\overline{u}^m)-\Phi(\overline{u}^k))d\overline{W}^m\rangle^{{\ell\ell}}(r) dx\\
&\quad+c\sum_{\ell=1}^n\int_{D}\!\!\int_0^td\langle\int_0^\cdot\Phi(\overline{u}^k)d(\overline{W}^m-\overline{W}^k)\rangle^{\ell\ell}(r) dx\\
&\quad+c\sum_{\ell=1}^n\int_{D}\!\!\int_0^td\langle\int_0^\cdot(\Phi(\overline{u}^m)-\Phi(\overline{u}^k))d\overline{W}^m,
\int_0^\cdot\Phi(\overline{u}^k)d(\overline{W}^m-\overline{W}^k)\rangle^{\ell\ell}(r) dx\\
&\leq c\sum_{\ell=1}^n\int_{D}\!\int_0^td\langle\int_0^\cdot(\Phi(\overline{u}^m)-\Phi(\overline{u}^k))d\overline{W}^m\rangle^{\ell\ell}(r) dx\\
&\quad+c\sum_{\ell=1}^n\int_{D}\!\int_0^td\langle\int_0^\cdot\Phi(\overline{u}^k)d(\overline{W}^m-\overline{W}^k)\rangle^{\ell\ell}(r) dx\\
&=J_{71}+J_{72}.
\end{align*}

By using \eqref{2.1}, \eqref{2.2} and \eqref{5.29}, we obtain
\begin{equation*}
\begin{split}
\overline{E}(J_{71})&\leq c\overline{E}\left(\int_0^t\|\Phi(\overline{u}^m)-\Phi(\overline{u}^k)\|^2_{L_2({U},L^2(D))}dr\right)\\
&\leq
c\overline{E}\left(\int_0^t\!\!\!\int_{D}|\overline{u}^m-\overline{u}^k|^2dxdr\right)\rightarrow0,\quad
m,k\rightarrow\infty
\end{split}
\end{equation*}

In view of  \eqref{2.3}, \eqref{5.28} and $\overline{u}^k\in
L^2(\overline{\Omega}\times
Q;\overline{\mathbb{P}}\otimes\mathcal{L}^{n+1})$ uniformly in $k$,
one deduces that
\begin{align*}
\overline{E}(J_{72})&= \overline{E}\left(\int_0^t\sum_i\left(\int_D|g_i(\overline{u}^k)|^2\text{Var}\left(\overline{\beta}_i^m(1)
-\overline{\beta}_i^k(1)\right)dx\right)dt\right)\\
&\leq c\overline{E}\left(\int_0^t\left(\int_{D}\sup_ii^2|g_i(\overline{u}^k)|^2dx\right)dt\right)\sum_i\frac{1}{i^2}\text{Var}
\left(\overline{\beta}_i^m(1)-\overline{\beta}_i^k(1)\right)\\
&\leq c\overline{E}\left(\int_0^t\int_{D}(1+|\overline{u}^k|^2)dxdt\right)\cdot\overline{E}\left(\|\overline{W}^m-\overline{W}^k\|^2_{C([0,T],{U}_0)}\right)\\
&\rightarrow0,\quad m,k\rightarrow\infty.
\end{align*}

From Corollary 3.1 and the usage of the cut-off function $\eta$, we
know that $\Phi_\vartheta$ inherits the properties of $\Phi$. So we
can estimate $J_8$ by the same method.  Plugging all together, we
have
\begin{align}\label{5.56}
\notag\lim\sup_{m}&\overline{E}\left(\int_Q
\eta(S(x,r,\mathbb{D}(\overline{u}^m))-\tilde{\overline{S}}):
\Psi_L(|\overline{v}^{m}-\overline{v}|)\mathbb{D}(\overline{v}^{m}-\overline{v})dxdr\right)\\
\notag&\leq \lim\sup_{m}\overline{E}\left(\int_Q
\eta(S(x,r,\mathbb{D}(\overline{u}^m))-\tilde{\overline{S}}):\nabla\{\Psi_L(|\overline{v}^{m}-\overline{v}|)\}
\otimes(\overline{v}^{m}-\overline{v}) dx dr\right)\\
&\quad+\lim\sup_{m}\overline{E}\left(\int_Q
\eta\overline{\vartheta}_1^{m}\text{div}(\Psi_L(|\overline{v}^{m}-\overline{v}|)(\overline{v}^{m}-\overline{v}))dxdr\right).
\end{align}
Since $\text{div}(\overline{v}^{m}-\overline{v})=0$, there holds
\begin{equation*}
\begin{split}
&\lim\sup_{m}\overline{E}\left(\int_Q \eta\overline{\vartheta}_1^{m}\text{div}(\Psi_L(|\overline{v}^{m}-\overline{v}|)(\overline{v}^{m}-\overline{v}))dxdr\right)\\
&=\lim\sup_{m}\overline{E}\left(\int_Q
\eta\overline{\vartheta}_1^{m}\nabla\{\Psi_L(|\overline{v}^{m}-\overline{v}|)\}\cdot(\overline{v}^{m}-\overline{v})dxdr\right).
\end{split}
\end{equation*}

Note that, for all $\ell\in \mathbb{N}_0$,
\begin{equation*}
\begin{split}
|\nabla\{\psi_{2^{-\ell}}(|\overline{v}^{m}-\overline{v}|)\}\cdot(\overline{v}^{m}-\overline{v})|&\leq|\psi'_{2^{-\ell}}(|\overline{v}^{m}-\overline{v}|)(\overline{v}^{m}-\overline{v})\otimes\nabla(\overline{v}^{m}-\overline{v})|\\
&\leq-2^{-\ell}|\overline{v}^{m}-\overline{v}|\psi'(2^{-\ell}|\overline{v}^{m}-\overline{v}|)|\nabla(\overline{v}^{m}-\overline{v})|\\
&\leq c|\nabla(\overline{v}^{m}-\overline{v})|_{\chi_{A_\ell}},
\end{split}
\end{equation*}
where $A_\ell:=\{2^\ell<|\overline{v}^{m}-\overline{v}|\leq
2^{\ell+1}\}$. This yields
\begin{equation*}
\begin{split}
|\nabla \Psi_L(|\overline{v}^{m}-\overline{v}|)(\overline{v}^{m}-\overline{v})|&\leq\sum_{\ell=0}^L|\nabla\{\psi_{2^{-\ell}}(|\overline{v}^{m}-\overline{v}|)\}(\overline{v}^{m}-\overline{v})|\\
&\leq
c\sum_{\ell=0}^L|\nabla(\overline{v}^{m}-\overline{v})|_{\chi_{A_\ell}}\leq
c|\nabla(\overline{v}^{m}-\overline{v})|.
\end{split}
\end{equation*}
By using \eqref{5.39} and \eqref{5.44}, we have
\begin{equation}\label{5.59}
\begin{split}
\nabla
\Psi_L(|\overline{v}^{m}-\overline{v}|)(\overline{v}^{m}-\overline{v})\in
L^q(\overline{\Omega}\times
Q;\overline{\mathbb{P}}\otimes\mathcal{L}^{n+1}),
\end{split}
\end{equation}
uniformly in $L$ and $m$. Then, we can  conclude that
\begin{equation}\label{5.60}
\begin{split}
\lim\sup_{m}&\overline{E}\left(\int_Q
\eta(S(x,r,\mathbb{D}(\overline{u}^m))-\tilde{\overline{S}}):
\Psi_L(|\overline{v}^{m}-\overline{v}|)\mathbb{D}(\overline{v}^{m}-\overline{v})dxdr\right)\leq
K.
\end{split}
\end{equation}

In view of \eqref{5.60}, using Cantor's diagonalizing principle,
there exists a subsequence with
$$\sigma_{\ell,m_\ell}\!:=\overline{E}\left(\int_Q \eta(S(x,r,\mathbb{D}(\overline{u}^{m_\ell}))\!-
\!\tilde{\overline{S}}):\psi_{2^{-\ell}}(|\overline{v}^{m_{\ell}}-\overline{v}|)\mathbb{D}(\overline{v}^{m_{\ell}}-\overline{v})dxdr\right)\rightarrow\sigma_\ell,$$
for $\ell\in \mathbb{N}_0$, as $\ell\rightarrow\infty$. From
\eqref{1.4}, we know that $\sigma_\ell\geq0$ for all $\ell\in
\mathbb{N}_0$
 and $\sigma_\ell$ is increasing in $\ell$. Thanks to \eqref{5.60}, we have
$$0\leq\sigma_0\leq\frac{\sigma_0+\sigma_2+\cdots+\sigma_\ell}{\ell}\leq\frac{K}{\ell},$$
for all $\ell\in \mathbb{N}$. Hence $\sigma_0=0$ and therefore
$$\overline{E}\left(\int_Q (S(x,r,\mathbb{D}(\overline{u}^m))\!-\tilde{\overline{S}}):\psi_1(|\overline{v}^{m}-\overline{v}|)
\mathbb{D}(\overline{v}^{m}-\overline{v})dxdr\right)\rightarrow0~\mbox{
as }~ m\rightarrow0.$$ It follows from \eqref{5.31} that
\begin{equation}\label{5.61}
\begin{split}
\overline{E}\left(\int_Q
(S(x,r,\mathbb{D}(\overline{u}^m))\!-\tilde{\overline{S}}):\psi_1(|\overline{v}^{m}-\overline{v}|)\mathbb{D}
(\overline{u}^{m}-\overline{u})dxdr\right)\rightarrow0~\mbox{ as }~
m\rightarrow0.
\end{split}
\end{equation}
Using \eqref{5.41} and the fact
$\psi_{2^-N}(|\overline{v}^m-\overline{v}|)\rightarrow1$ as
$m\rightarrow\infty$, one has
\begin{equation}\label{5.62}
\begin{split}
\lim\sup_{m}\overline{E}\left(\int_Q
S(x,r,\mathbb{D}(\overline{u}^m))\!:\psi_1(|\overline{v}^{m}-\overline{v}|)\mathbb{D}
(\overline{u}^{m})dxdr\right)=\overline{E}\left(\int_Q\tilde{\overline{S}}\!:\mathbb{D}
(\overline{u})dxdr\right).
\end{split}
\end{equation}
Lemma A.2 in \cite{JW} implies that
$\tilde{\overline{S}}=S(x,t,\mathbb{D}(\overline{u}))$. Then we
complete the proof of Theorem 2.1.

\section*{Acknowledgments}
The work of Z. Tan and Y.C. Wang was supported by the National
Natural Science Foundation of China (No. 11271305, 11531010). H.Q.
Wang's research was supported by National Postdoctoral Program for
Innovative Talents (No. BX201600020).

\end{document}